\def\d{\delta} 
\def\e{{\epsilon}}
\newcommand{\inred}[1]{{\color{red} #1}}
\newcommandx{\question}[2][1=]{\todo[linecolor=red,backgroundcolor=red!20,bordercolor=red,#1]{#2}}
\newcommandx{\change}[2][1=]{\todo[linecolor=blue,backgroundcolor=blue!25,bordercolor=blue,#1]{#2}}
\newcommandx{\info}[2][1=]{\todo[linecolor=OliveGreen,backgroundcolor=OliveGreen!25,bordercolor=OliveGreen,#1]{#2}}
\newcommandx{\improve}[2][1=]{\todo[linecolor=Plum,backgroundcolor=Plum!25,bordercolor=Plum,#1]{#2}}
\newcommandx{\answer}[2][1=]{\todo[linecolor=blue,backgroundcolor=White!25,bordercolor=Plum,#1]{#2}}
\newcommandx{\suggest}[2][1=]{\todo[linecolor=blue,backgroundcolor=White!25,bordercolor=red,#1]{#2}}
\begin{document}

\title{Unified geometric multigrid algorithm for hybridized high-order finite element methods}

    \author{Tim Wildey\thanks{ Sandia National Laboratories, Center for Computing Research ({\tt tmwilde@sandia.gov}). The views expressed in the article do not necessarily represent the views of the U.S. Department of Energy or the United States Government.  Sandia National Laboratories is a multimission laboratory managed and operated by National Technology and Engineering Solutions of Sandia, LLC., a wholly owned subsidiary of Honeywell International, Inc., for the U.S. Department of Energy's National Nuclear Security Administration under contract DE-NA-0003525.} \and Sriramkrishnan Muralikrishnan\thanks{Department of Aerospace Engineering and Engineering Mechanics, The University of Texas at Austin, Austin, TX 78712, USA.} \and Tan Bui-Thanh\thanks{Department of Aerospace Engineering and Engineering Mechanics, and the Institute for Computational Engineering and Sciences, The University of Texas at Austin, Austin, TX 78712, USA.} }

\bibliographystyle{siam}
\newcommand{\TODO}[1]{ \fbox{\parbox{3in}{\bf TODO: #1}}}

\newcommand{\grbf}[1] {\mbox{\boldmath${#1}$\unboldmath}}
\newcommand{\gbf}[1] {\mathbf{#1}}

\newcommand{\beq} {\begin{equation}}
\newcommand{\eeq} {\end{equation}}
\newcommand{\bdm} {\begin{displaymath}}
\newcommand{\edm} {\end{displaymath}}
\newcommand{\bit}{\begin{itemize}}
\newcommand{\eit}{\end{itemize}}
\newcommand{\bde}{\begin{description}}
\newcommand{\ede}{\end{description}}
\newcommand{\bce}{\begin{center}}
\newcommand{\ece}{\end{center}}
\newcommand{\ben} {\begin{enumerate}}
\newcommand{\een} {\end{enumerate}}
\newcommand{\bea} {\begin{eqnarray}}
\newcommand{\eea} {\end{eqnarray}}
\newcommand{\barr} {\begin{array}}
\newcommand{\earr} {\end{array}}
\newcommand{\bean} {\begin{eqnarray*}}
\newcommand{\eean} {\end{eqnarray*}}
\newcommand{\edoc} {

\maketitle

\begin{abstract}
    We consider a standard elliptic partial differential equation and
    propose a geometric multigrid algorithm based on
    Dirichlet-to-Neumann (DtN) maps for hybridized high-order finite
    element methods.  The proposed unified approach is
     applicable to any locally conservative hybridized finite
    element method including multinumerics with different
    hybridized methods in different parts of the domain.
    For these
    methods, the linear system involves only the unknowns residing on
    the mesh skeleton, and constructing intergrid transfer operators is
    therefore not trivial.
The key to our geometric multigrid algorithm is the physics-based energy-preserving intergrid
transfer operators which depend only on the fine scale DtN maps. Thanks to these operators, we completely avoid upscaling 
of parameters and no information regarding subgrid physics is explicitly required on coarse meshes. Moreover, our algorithm is 
agglomeration-based
and can straightforwardly handle unstructured meshes.
We perform extensive numerical studies with hybridized mixed methods,
hybridized discontinuous Galerkin method, weak Galerkin method, and a hybridized version of interior penalty discontinuous Galerkin methods 
on a range of elliptic problems including  subsurface flow through  highly heterogeneous porous media. We compare the performance of different smoothers and analyze the effect of
stabilization parameters on the scalability of the multigrid algorithm. 
\end{abstract}

\pagestyle{myheadings} \thispagestyle{plain} \markboth{T.~Wildey, S.~Muralikrishnan, T.~Bui-Thanh}{Unified Geometric Multigrid}


\begin{keywords}
Iterative solvers, Geometric multigrid, Hybridized methods, HDG, High-Order, Dirichlet-To-Neumann maps, multinumerics
\end{keywords}

\begin{AMS}
65N30, 65N55, 65N22, 65N12, 65F10
\end{AMS}


\section{Introduction}
\label{sec_int}

Hybridization of finite element methods was first introduced in 1965 \cite{fraeijs1965displacement}
for solving linear elasticity problems. Hybridized finite element
methods have come a long way since then and a vast amount of research
has been done over the past few decades 
(see,
e.g. \cite{roberts1991mixed,cockburn2004characterization,Gir_Sun_Whee_Yot_DG_MFE_mort_08,CockburnGopalakrishnanLazarov:2009:UHO,brezzi2012mixed}). Hybridized
methods offer a number of significant advantages over the original ones, and some
of which are: 1) the resulting linear system can be significantly
smaller and sparser 
\cite{CockburnGopalakrishnanLazarov:2009:UHO,bui2016construction}; 2)
$hp$-adaptivity is natural for hybridized
methods due to the already available skeletal space; 3) it offers
a natural way to couple different numerical methods in different parts
of the domain (multinumerics) and hence exploiting their individual
strengths; and 4) for certain problems the trace unknowns can be used to  post-process the solution to obtain
super-convergence \cite{arnold1985mixed,CockburnGopalakrishnanLazarov:2009:UHO}.

The main challenge facing  hybridized methods is however the construction of scalable solvers for the linear system
involving only trace unknowns on the mesh skeleton. 
Over the past 30 years, a
tremendous amount of research has been devoted to the convergence of
multigrid methods for such linear systems, both as iterative methods
and as preconditioners for Krylov subspace methods.  Optimal
convergence with respect to the number of unknowns is usually obtained
under mild elliptic regularity assumptions
\cite{bramble1993multigrid,Bramble94uniformconvergence,bpx1991}.
Multigrid algorithms have been developed for mortar domain
decomposition methods \cite{339353,YotovMultigrid}.  Several multigrid
algorithms have been proposed for hybridized mixed finite element
methods \cite{Bren_MG_MFE_92, Chen_equiv_96}.  Most of them are based
on an equivalence between the interface operator and a nonconforming
finite element method,  
and for these methods optimal convergence has
already been established \cite{Braess:1990,
  Brenner:nonconfmg}. Multigrid algorithms based on restricting the
trace (skeletal) space to linear continuous finite element space has been
proposed for hybridized mixed methods
\cite{Gopalakrishnan09aconvergent}, hybridized discontinuous Galerkin
methods \cite{cockburn2014multigrid} and weak Galerkin methods
\cite{chen2015auxiliary}. These algorithms fall under the non-inherited category, i.e., the coarse scale
operators do not inherit all the properties of the fine scale ones.
To the best of our knowledge, no multigrid
algorithm has been developed for the case of multinumerics.

The objective of this work is to develop a multigrid algorithm 
that applies to both hybridized formulations and multinumerics.
The algorithm applies to both structured and unstructured grids. At
the heart of our approach is the energy-preserving intergrid transfer
operators which are a function of only the fine scale DtN maps (to be discussed in details in section \secref{multigrid_algorithm}).  As
such they avoid any explicit upscaling of parameters, and at the same
time allow for the use of multinumerics throughout the domain. The
multigrid algorithm presented in this paper thus differs from the
existing approaches in that the Galerkin coarse grid operator is a
discretized DtN map on every level. 



This paper is organized as follows. Section
\secref{hybrid_methods} introduces the model problem, notation, and
 hybridized methods considered in this paper. In Section
\secref{multigrid_algorithm}, we define the necessary ingredients for
our geometric multigrid algorithm, that is, the coarsening strategy, the
intergrid transfer operators, the local correction operator, and the
smoothing operator. These operators are then used to define the multigrid algorithm. 
Section \secref{numerical} presents several
numerical examples to study the robustness of the proposed
algorithm for different hybridized methods, smoothers and test
cases. Finally, Section \secref{conclusion} summarizes our
findings and discusses  future research directions.


\section{Model Problem, Notation and Hybridized methods}\seclab{hybrid_methods}
In this section we will first introduce the notations and discuss
some of the locally conservative hybridized methods
proposed in the recent years for the second order elliptic
equation.

Consider the following second order elliptic equation
\begin{subequations}
  \label{primal_elliptic}
\begin{align}
    -\Div\LRp{{\bf K} \Grad \pres} &= f, \quad &\text{in} \quad \Omega, \\
    \pres&=g_{D},  \quad  &\text{on} \quad \pOmega.
\end{align}
\end{subequations}
where $\Omega$ is an open, bounded, and connected subset of $\mathbb{R}^d$, with $d\in \LRc{2,3}$. Here, $\tK$ is a
symmetric, bounded, and uniformly positive definite tensor, $f \in L^2(\Omega)$, and
$g_D \in H^{3/2}(\partial \Omega)$. 
Let $\Thn$ be a conforming partition of $\Omega$ into
$\Nel$ non-overlapping elements $\Kj$, $j = 1, \hdots, \Nel$, with
Lipschitz boundaries such that $\Thn := \cup_{j=1}^\Nel \Kj$ and
$\overline{\Omega} = \overline{\Thn}$. The mesh size $h$ is defined as $h
:= \max_{j\in \LRc{1,\hdots,\Nel}}\text{diam}\LRp{\Kj}$. We denote the
skeleton of the mesh by $\Gh := \cup_{j=1}^\Nel \pK_j$:
the set of all (uniquely defined) interfaces $\e$ between elements. We conventionally identify $\nm$ as the outward 
normal vector on the boundary $\pK$ of element $\K$ (also denoted as $\Km$) and $\np = -\nm$ as the outward normal vector of the boundary of a neighboring element (also denoted as $\Kp$). Furthermore, we use $\n$ to denote either $\nm$ or $\np$ in an expression that is valid for both cases, and this convention is also used for other quantities (restricted) on
a face $\e \in \Gh$.


For simplicity, we define $\LRp{\cdot,\cdot}_\K$ as the
$L^2$-inner product on a domain $\K \subset \R^\d$ and
$\LRa{\cdot,\cdot}_\K$ as the $L^2$-inner product on a domain $\K$ if
$\K \subset \R^{\d-1}$. We shall use $\nor{\cdot}_{\K} :=
\nor{\cdot}_{\Ltw}$ as the induced norm for both cases.
Boldface lowercase letters are conventionally used for vector-valued functions and in that
case the inner product is defined as $\LRp{\ub,\vb}_\K :=
\sum_{i=1}^m\LRp{\ub_i,\vb_i}_\K$, and similarly $\LRa{\ub,\vb}_\K :=
\sum_{i = 1}^m\LRa{\ub_i,\vb_i}_\K$, where $\m$ is the number of
components ($\ub_i, i=1,\hdots,\m$) of $\ub$.  Moreover, we define
$\LRp{\ub,\vb}_{\Thn} := \sum_{\K\in \Thn}\LRp{\ub,\vb}_\K$ and
$\LRa{\ub,\vb}_\Gh := \sum_{\e\in \Gh}\LRa{\ub,\vb}_\e$ whose
induced norms are clear, and hence their definitions are
omitted. We  employ boldface uppercase letters, e.g. $\tK$, to
denote matrices and tensors. 
We denote by $\Poly^\p\LRp{\K}$ the space of polynomials of degree at
most $\p$ on a domain $\K$. We use the terms ``skeletal unknowns",
``trace unknowns" and ``Lagrange multipliers" interchangeably and they all refer to the unknowns 
 on the mesh skeleton.
 
 First, we cast equation \eqref{primal_elliptic} into the following first-order or mixed form:
 \begin{subequations}
   \label{mixed_elliptic}
     \begin{align}\label{mixed_eq1}
         \ub&=-{\bf K}\Grad \pres \quad &\text{ in }  \Omega,\\\label{mixed_eq2}
         \Div \ub &= f \quad &\text{ in } \Omega,\\\label{mixed_boundary}
    \pres &= g_{D} \quad &\text {on } \pOmega.
\end{align}
\end{subequations}

The hybrid mixed DG method or hybridized DG (HDG) method for the
discretization of equation \eqref{mixed_elliptic} is defined as
\begin{subequations}
  \label{HDG}
\begin{align}\label{HDG_local1}
    \LRp{{\bf K}^{-1}\ub,\vb}_\K -\LRp{\pres,\Div \vb}_\K + \LRa{\lambda,\vb \cdot \n}_\pK &= 0, \\\label{HDG_local2}
    -\LRp{\ub,\Grad \w}_\K + \LRa{\ubh \cdot \n,\w}_\pK &= \LRp{f,w}_\K, \\
    \LRa{\jump{\ubh \cdot \n},\mu}_\e&=0,
    \label{conservation}
\end{align}
\end{subequations}
where the numerical flux $\ubh \cdot \n$ is given by
\begin{equation}
    \ubh \cdot \n = \ub \cdot \n + \tau(\pres - \lambda).
    \label{HDG_flux}
\end{equation}

For simplicity, we have ignored the fact that the equations \eqref{HDG_local1}, \eqref{HDG_local2} and \eqref{conservation} must hold for all test functions $\vb \in \VbhK$, $\w \in W_{h}\LRp{\K}$, and $\mu \in M_{h}\LRp{\e}$, respectively (this is implicitly understood throughout the paper), where $\Vbh$, $W_h$ and $M_h$ are defined as
\begin{align*}
\Vbh\LRp{\Thn} &= \LRc{\vb \in \LRs{L^2\LRp{\Thn}}^d:
  \eval{\vb}_{\K} \in \LRs{\Poly^\p\LRp{\K}}^d, \forall \K \in \Thn}, \\
 W_h\LRp{\Thn} &= \LRc{\w \in L^2\LRp{\Thn}:
  \eval{\w}_{\K} \in \Poly^\p\LRp{\K}, \forall \K \in \Thn}, \\   
M_h\LRp{\Gh} &= \LRc{\lambda \in \Lte:
  \eval{\lambda}_{\e} \in \Poly^\p\LRp{\e}, \forall \e \in \Gh},
\end{align*}
and similar spaces  $\VbhK$, $W_h\LRp{\K}$ and $M_h\LRp{\e}$ on $\K$ and $\e$ can be defined by replacing $\Thn$ with
$\K$ and $\Gh$ with $\e$, respectively.


Next, we consider the hybridized interior penalty DG (IPDG) schemes posed in the primal form. To that end, we test equation \eqref{mixed_eq1} with $\vb=\Grad \w$  and then integrate by parts twice the terms on the right hand side to obtain
\begin{equation*}
    \LRp{\ub,\Grad \w}_\K =-\LRp{{\bf K}\Grad\pres,\Grad \w}_\K + \LRa{(\pres-\lambda),{\bf K}\Grad \w \cdot \n}_\pK.
\end{equation*}
Substituting this in equation \eqref{HDG_local2} gives
\begin{equation}
\LRp{{\bf K}\Grad\pres,\Grad \w}_\K - \LRa{(\pres-\lambda),{\bf K}\Grad \w \cdot \n}_\pK + \LRa{\ubh \cdot \n,\w}_\pK = \LRp{f,w}_\K. 
\label{IPDG_flux_form}
\end{equation}
For IPDG schemes, the numerical flux takes the form
\begin{equation}
    \ubh \cdot \n = -{\bf K}\Grad \pres \cdot \n + \tau(\pres - \lambda),
    \label{IPDG_flux}
\end{equation}
and it is required to satisfy the conservation condition \eqref{conservation}.
Substituting the numerical flux \eqref{IPDG_flux} in \eqref{IPDG_flux_form} we get the following primal form of the hybridized IPDG scheme
\begin{multline}
 \LRp{{\bf K}\Grad\pres,\Grad \w}_\K - \LRa{(\pres-\lambda),{\bf K}\Grad \w \cdot \n}_\pK - \LRa{{\bf K}\Grad \pres \cdot \n,\w}_\pK
  \\  +\LRa{\tau(\pres-\lambda),\w}_\pK = \LRp{f,w}_\K. 
\label{SIPG_H}
\end{multline}
This scheme is called the hybridized symmetric IPDG scheme (SIPG-H)
and has been studied in
\cite{Gir_Sun_Whee_Yot_DG_MFE_mort_08,CockburnGopalakrishnanLazarov:2009:UHO,waluga2012analysis}.
In order to include the other hybridized IPDG schemes, we can generalize \eqref{SIPG_H} to
\begin{multline}
    \LRp{{\bf K}\Grad\pres,\Grad \w}_\K - s_{f}\LRa{(\pres-\lambda),{\bf K}\Grad \w \cdot \n}_\pK - \LRa{{\bf K}\Grad \pres \cdot \n,\w}_\pK
    \\ +\LRa{\tau(\pres-\lambda),\w}_\pK = \LRp{f,w}_\K. 
\label{IPG_H}
\end{multline}
where $s_{f} \in \LRc{-1,0,1}$. The scheme corresponding to $s_{f}=-1$ is called the
hybridized non-symmetric IPDG scheme (NIPG-H) and the one with
$s_{f}=0$ is called the hybridized incomplete IPDG scheme (IIPG-H)
\cite{Gir_Sun_Whee_Yot_DG_MFE_mort_08}. 
It is interesting to note that in the HDG scheme, if we
do not integrate by parts equation \eqref{HDG_local1}  and
substitute $\ub=-{\bf K}\Grad\pres$ in equations \eqref{HDG_local2}
and \eqref{HDG_flux} we obtain the IIPG-H scheme.  On the other hand, if
we do not integrate by parts equation \eqref{HDG_local2} and take $\ubh\cdot\n=\ub\cdot\n$ in equation
\eqref{conservation} we obtain the hybridized mixed methods as follows
\begin{subequations}
\begin{align}\label{mixed_local1}
    \LRp{{\bf K}^{-1}\ub,\vb}_\K -\LRp{\pres,\Div \vb}_\K + \LRa{\lambda,\vb \cdot \n}_\pK &= 0, \\\label{mixed_local2}
    \LRp{\Div \ub,\w}_\K &= \LRp{f,w}_\K,
\end{align}
\end{subequations}
and we need to enforce the conservation condition $\LRa{\jump{\ub \cdot \n},\mu}_\e=0$.
 However, in
this case we can no longer choose the same solution order for both $\ub$ and $\pres$ due to
the lack of stabilization. Indeed, inf-sup stable mixed finite element spaces have
to be chosen for $\Vbh$ and $W_h$. For the details, we refer the readers to \cite{cockburn2004characterization} for the hybridized Raviart-Thomas (RT-H) and the hybridized Brezzi-Douglas-Marini (BDM-H) mixed finite element spaces. 

We note in passing that the hybridized weak Galerkin mixed finite
element methods introduced in \cite{mu2016hybridized} follows similar
hybridization as done in the hybridized mixed methods but with a different
choice for local spaces accompanied with a stabilization term. In many
cases one can show that HDG and weak Galerkin methods coincide, and we
do not attempt to distinguish them in this paper.

The common solution procedure for all of these hybridized methods can
now be described. First, we express the local volume unknowns $\ub$
and/or $\pres$, element-by-element, as a function of the skeletal
unknowns $\lambda$. Then, we use the conservation condition to
construct a global linear system involving only the skeletal
unknowns. Once they are solved for, the local volume
unknowns can be recovered in an element-by-element fashion completely
independent of each other. The main advantage of this Schur complement
approach is that, for high-order methods, the global trace system is
much smaller and sparser compared to the linear system for the volume
unknowns
\cite{CockburnGopalakrishnanLazarov:2009:UHO,bui2016construction}.
The question that needs to be addressed is how to solve the trace system efficiently. In the next section we
develop a geometric multigrid algorithm to provide an answer.

\section{Geometric multigrid algorithm based on DtN maps}\seclab{multigrid_algorithm}
For all of the hybridized methods described in the previous section, the resulting linear systems for the skeletal unknowns $\lambda$, in operator form, can be written as
\begin{equation}
    \label{trace_linear_system}
    A \lambda = g.
\end{equation}
The well-posedness of the trace system \eqref{trace_linear_system} for the 
hybridized methods discussed in the previous section has been shown in 
\cite{CockburnGopalakrishnanLazarov:2009:UHO,Gir_Sun_Whee_Yot_DG_MFE_mort_08,mu2016hybridized}.

The concept of a Dirichlet-to-Neumann (DtN) map is essential
    to our approach, so we briefly describe it here. A map $A$ is
    called a DtN map/operator if it maps Dirichlet data on a
    domain boundary to Neumann data. It is a particular type of
    Poincar\'e-Steklov operator, which contains a large class of operators
    which map one type of boundary condition to another for elliptic
    PDEs. For hybridized methods in equation \eqref{HDG}, the
    trace variable $\lambda$ plays the role as the flux for the
    auxiliary equation \eqref{HDG_local1} and hence the operator $A$
    in equation \eqref{trace_linear_system} is a DtN operator which
    maps Dirichlet boundary data of the original PDE
    \eqref{primal_elliptic} to Neumann data $\lambda$. In
    finite-dimension, the Schur complement of the
    linear system with volume unknowns condensed out is also known as discrete DtN map. We refer the
    readers to \cite{quarteroni1999domain} for more information.

To define our multigrid algorithm 
we start with a sequence of partitions of the mesh $\Thn$:
$$ \T_1,\T_2,\ldots,\T_N = \Thn,$$ where each $\T_k$ contains
$N_{T_k}$ closed (not necessarily convex, except on the finest level
  where each $N_{T_k}$ is in fact some element $\K_j$ in $\Thn$)
macro-elements consisting of
unions of macro-elements from $\T_{k+1}$.
Associated with each partition, we define interface grids $
\E_{1},\E_{2},\ldots,\E_{N} = \Eh, $ where each $e \in \E_k$ is the
intersection of two macro-elements in $\T_k$. Each partition $\E_{k}$
is in turn associated with a skeletal (trace)
space $M_{k}$ (to be defined in Section \secref{c_strategy}).

\begin{remark}
We note that if each $\T_k$ consists of simplices or parallelopipeds,
then each $M_k$ is straightforward to define.  In general, however, the
macro-elements need not be one of these standard shapes, and we need to
 define a parameterization of each macro-edge to define $M_k$.
While this is certainly nontrivial, this calculation only
needs to be performed one time for each mesh and can be reused for
many simulation or time steps as long as the macro-elements
(agglomeration) do not change.  For all of the results presented in
Section \secref{numerical}, the time required to calculate these
parameterizations is negligible in comparison with the total
simulation time.
\end{remark}




We are now in the position to introduce necessary ingredients to define our geometric multigrid algorithm.

\subsection{Coarsening strategy}\seclab{c_strategy}
In the multigrid algorithm we first coarsen in $\p$, followed by the coarsening in $h$. We explain
these two coarsening procedures in detail as follows.

\subsubsection{$\p$-coarsening} In our $\p$-coarsening strategy, we restrict the solution order $\p$
on the finest level $N$ to $\p=1$ on level $N-1$ with the same number of elements. In this case the
Lagrange multiplier spaces become
\[ M_k = 
\begin{cases}
    \LRc{\eta\in \Poly^{\p}(e), \forall e \in \E_k} &\quad \text{for} \quad k=N,\\
    \LRc{\eta\in \Poly^1(e), \forall e \in \E_k} &\quad \text{for} \quad k=1,2,\cdots,N-1.
\end{cases}
\]
Let us comment on one subtle point.  It would be ideal if we could use
$\Poly^0$ in coarser levels because it is trivial to define
piecewise constant functions along arbitrary macro-edges (2D) or
macro-faces (3D), i.e., no parameterization is needed. However, multilevel algorithms using restriction
and prolongation operators based on piecewise constant interpolation
typically have been shown not to perform well \cite{ccfd:1996,kwak:552}.

For high-order methods, the numerical examples in Section~\secref{numerical} indicate that 
our strategy gives scalable results in solution order $\p$ when
strong smoothers such as block-Jacobi or Gauss-Seidel is used. This is also observed in
\cite{helenbrook2006application}, for a $\p$-multigrid approach applied to DG methods for Poisson equation using block-Jacobi smoother. Other
$\p$-multigrid strategies, such as $\p_{k-1}=\p_{k}/2$ ($\p_k$ is the solution order on the $k$th level) or
 $\p_{k-1}=\p_{k}-1$, can be straightforwardly incorporated within our approach. 
Our future work will include
theoretical and numerical comparisons among these strategies. 
Once we interpolate to $\p=1$ we carry out the $h$-coarsening as described in the next section.

\begin{figure}[h!t!b!]
  \subfigure[]{
      \includegraphics[width=0.22\columnwidth]{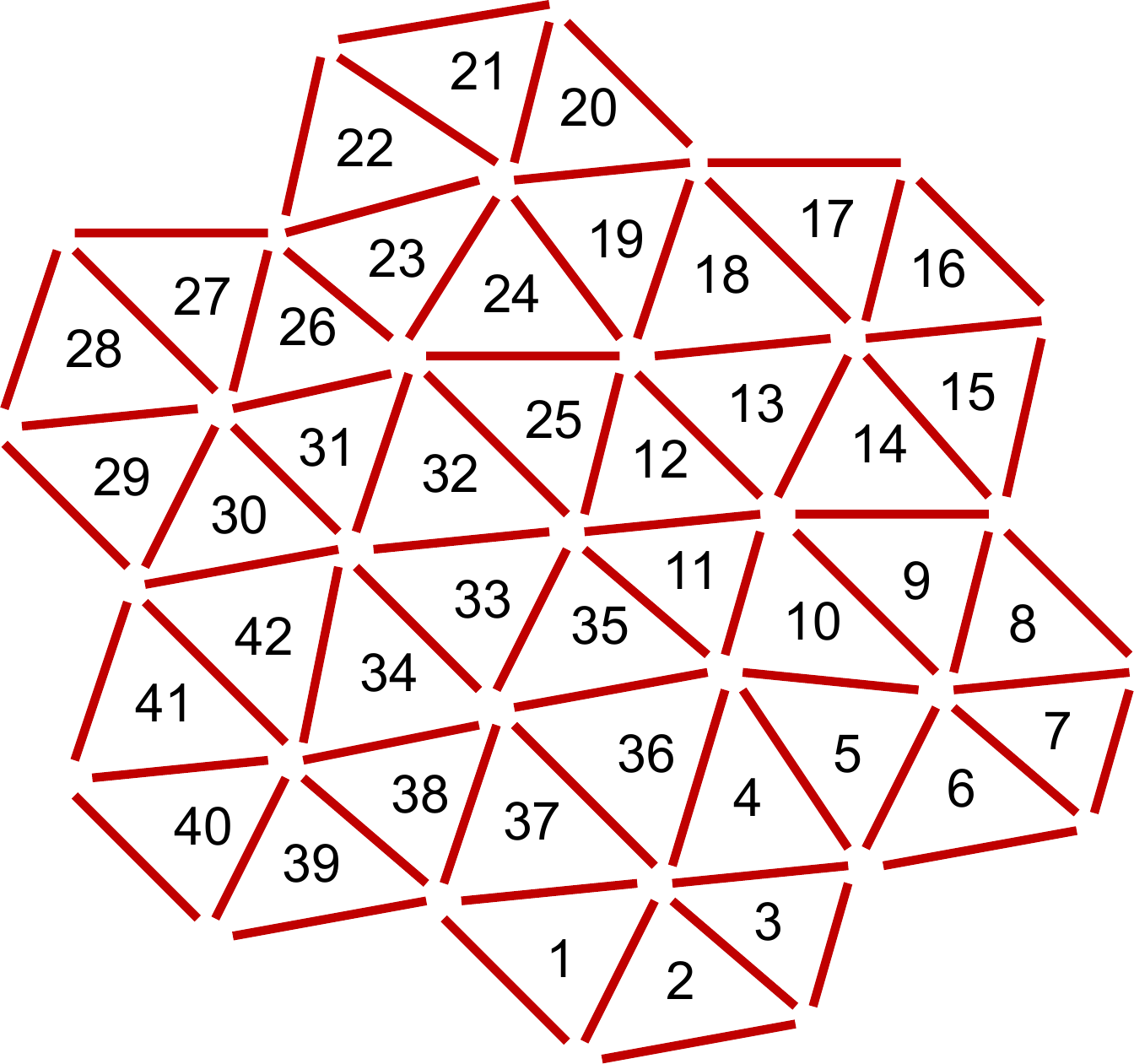}
      \figlab{meshk}
  }
    \subfigure[ ]{
      \includegraphics[width=0.22\columnwidth]{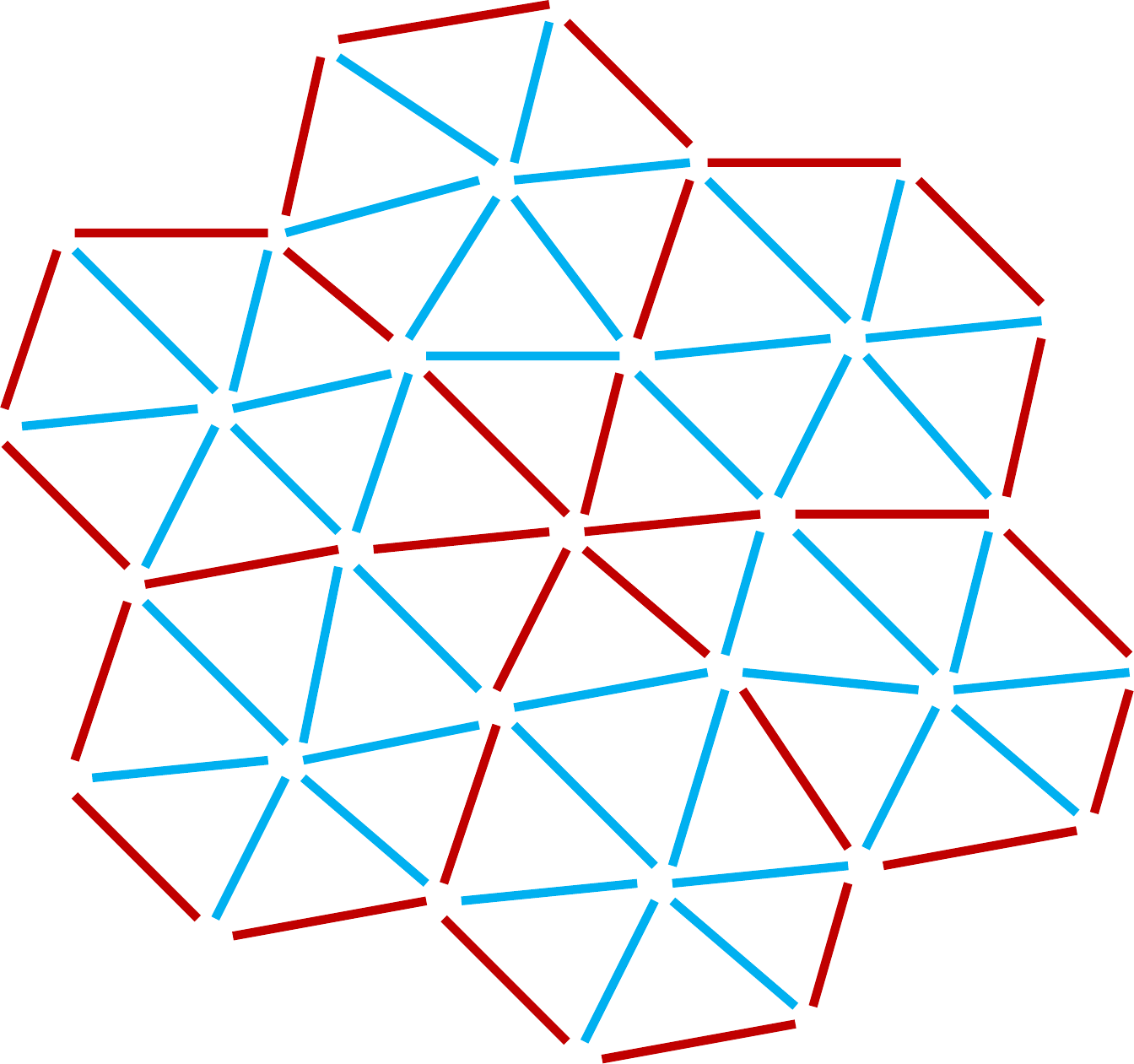}
      \figlab{identify_int_bound}
  }
  \subfigure[]{
      \includegraphics[width=0.22\columnwidth]{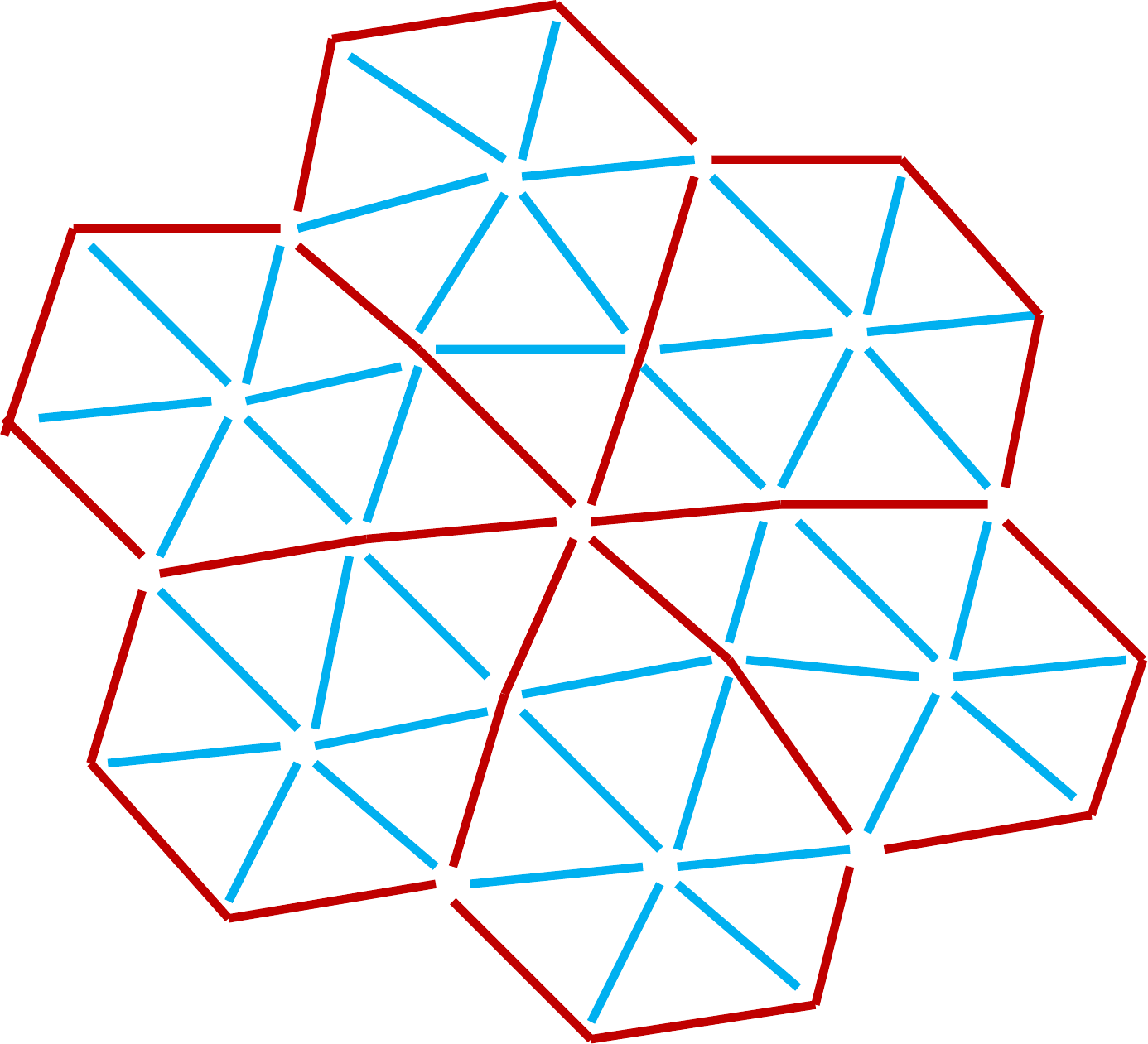}
      \figlab{coarsen_bound}
  }
  \subfigure[]{
      \includegraphics[width=0.22\columnwidth]{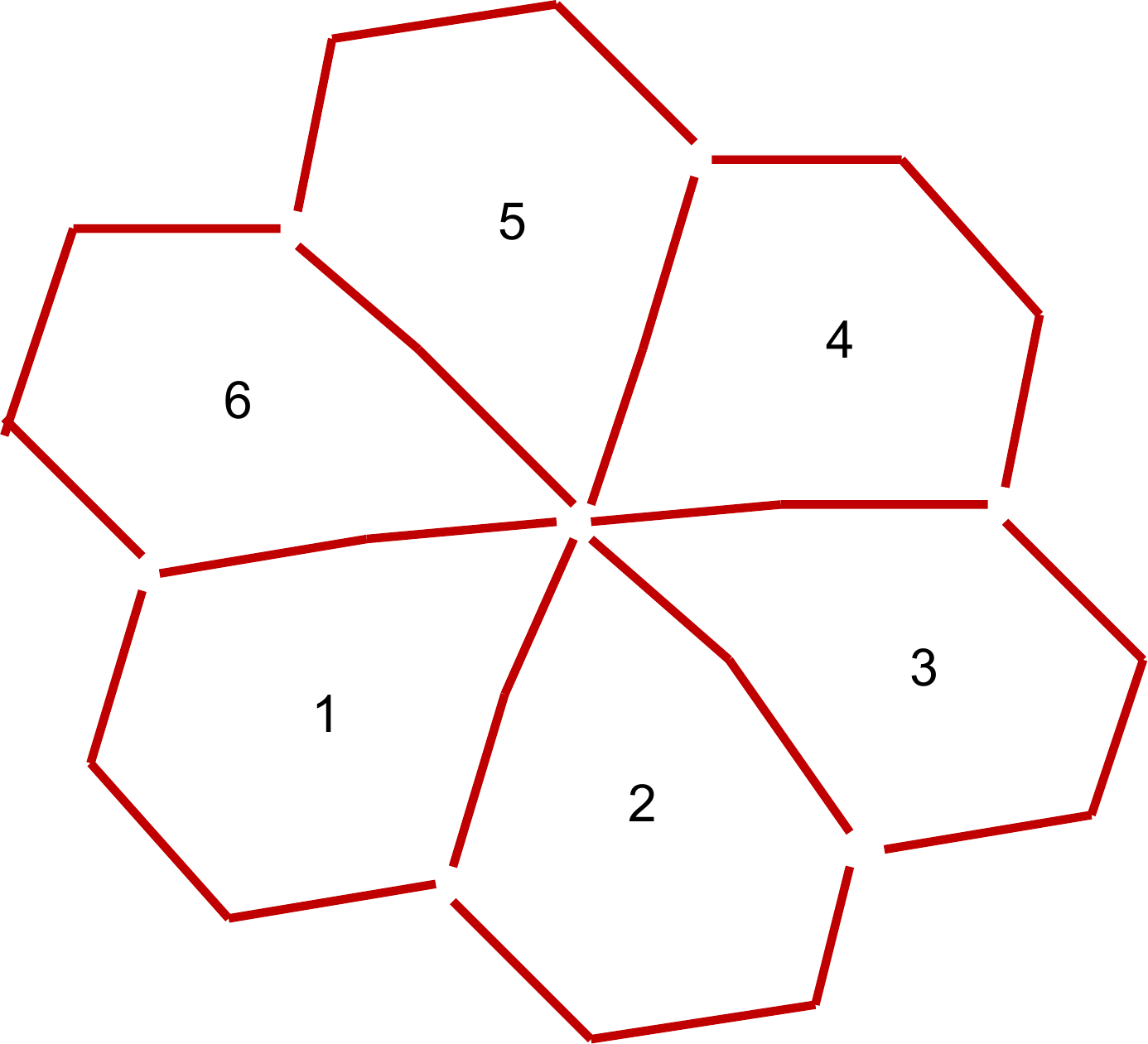}
      \figlab{meshkm1}
  }
    \caption{A demonstration of $h$-coarsening strategy. (a) Level $k$ mesh; (b) An identification of interior $\E_{k,I}$ (blue) and boundary $\E_{k,B}$ (red) edges; (c) Coarsening of boundary edges; and (d) Level $k-1$ mesh after interior edges are statically condensed out. The numbers in (a) and 
    (d) represent the number of (macro) elements in levels $k$ and
    $k-1$, respectively.}
 \vspace{-7mm}
  \figlab{coarsening_strategy}
\end{figure}

\subsubsection{$h$-coarsening} We adopt an agglomeration-based $h$-coarsening, allowing
intergrid transfer and coarse grid operators to be defined using
 the fine scale DtN maps.  By taking this approach, no upscaling
of parameters is required and our coarse operators are discretized
DtN maps at any mesh
level (see Proposition \proporef{DtN}). In Figure~\figref{coarsening_strategy}, we show an
$h$-coarsening strategy between two consecutive levels $k$ and $k-1$. First,
we identify the interior (blue) and boundary (red) edges in Figure \figref{identify_int_bound}. 
We decompose $\E_k = \E_{k,I}\oplus \E_{k,B}$,
where $\E_{k,I}$
consists of edges interior to macro elements in the coarser partition $\T_{k-1}$ and $\E_{k,B}$ contains edges common to their
boundaries. 
We also decompose the trace space $M_k$ on $\E_k$ into two parts $M_{k,I}$ and $ M_{k,B}$ corresponding to $\E_{k,I}$ and $\E_{k,B}$, respectively. Specifically, we require  $M_k = M_{k,I}\oplus M_{k,B}$ such that each $\lambda_k
\in M_k$ can be uniquely expressed as $\lambda_k = \lambda_{k,I} +
\lambda_{k,B}$, where
\[ \lambda_{k,I} = 
\begin{cases}
 \lambda_k,& \text{ on } M_{k,I},\\
 0,& \text{ on } M_{k,B},
\end{cases}
\quad \text{ and } \quad
\lambda_{k,B} = 
\begin{cases}
 0,& \text{ on } M_{k,I},\\
 \lambda_k,& \text{ on } M_{k,B}.
\end{cases}
\]
Given the decomposition $M_k = M_{k,I}\oplus M_{k,B}$, the trace system \eqref{trace_linear_system} at the $k$th level
can be written as 
\begin{equation}
  \label{trace_partition}
  A_k \lambda_k = g_k \Leftrightarrow
        \LRs{
     \begin{array}{cc}
       A_{k,II} & A_{k,IB} \\
         A_{k,BI} & A_{k,BB}
     \end{array}
      }
      \LRs{
      \begin{array}{c}
          \lambda_{k,I} \\
          \lambda_{k,B}
      \end{array}
      }
      =
      \LRs{
      \begin{array}{c}
          g_{k,I} \\
          g_{k,B}
      \end{array}
      }.
\end{equation}
The coarser space $M_{k-1}$ is defined such that $M_{k-1} \subset M_{k,B}$.
This is done by first agglomerating the boundary edges of level $k$ as in Figure \figref{coarsen_bound}, and then statically condensing out the interior edges to obtain the mesh on level $k-1$
in Figure \figref{meshkm1}. The elements in level $k$ are numbered from
$1$ to $42$ in Figure \figref{meshk} and the macro elements in level $k-1$
are numbered from $1$ to $6$ in Figure \figref{meshkm1}

Clearly, for a unstructured mesh the identification of interior and
boundary edges is non-trivial and non-unique. In this paper we use an
ad-hoc approach which is sufficient
for the purpose of demonstrating our proposed algorithm. Specifically,
we first select a certain number of levels $N$ and seed points
$N_{T_1}$. The locations of these seed points are chosen based on the geometry of the domain and the original fine mesh, such that we approximately have equal number of fine mesh elements in each macro-element at level $1$. Based on these
selected seed points, we agglomerate the elements in the finest mesh
($k=N$) to form $N_{T_1}$ macro-elements at level $1$. We then divide each macro-element in level $1$ into four approximately equal macro-elements to form level $2$. This process is recursively repeated to create macro-elements in finer levels $k=3,\cdots,N-1$.
As an illustration we consider the mesh in Figure \ref{fat_box} (this mesh and the corresponding coarsening strategies will be used in numerical example II in Section \secref{exp2}) and in Figures \figref{cs1} and \figref{cs2} we show two different coarsening strategies with $N = 7$ obtained from seven and four seed points. 
\begin{figure}[h!b!t!]
\begin{center}
\includegraphics[trim=3.5cm 11.15cm 2cm 10.65cm,clip=true,width=0.5\textwidth]{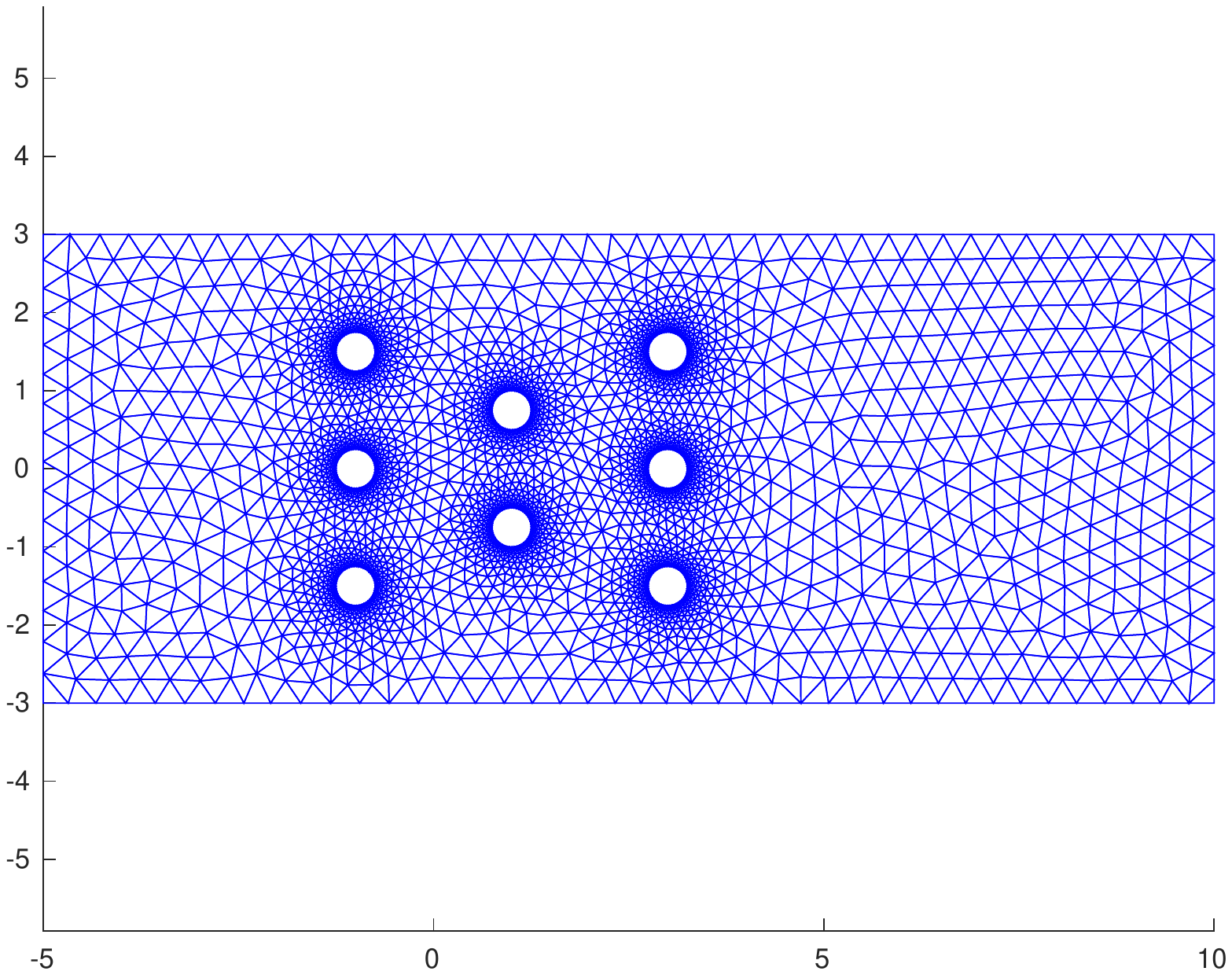}
\caption{Example II: Unstructured mesh for a rectangular box with eight holes.}
\label{fat_box}
\end{center}
\end{figure}
\vspace{-5mm}

\begin{figure}[h!t!b!]
\subfigure[Level 6]{
\includegraphics[trim=3.5cm 11.15cm 2cm 10.65cm,clip=true,width=0.3\columnwidth]{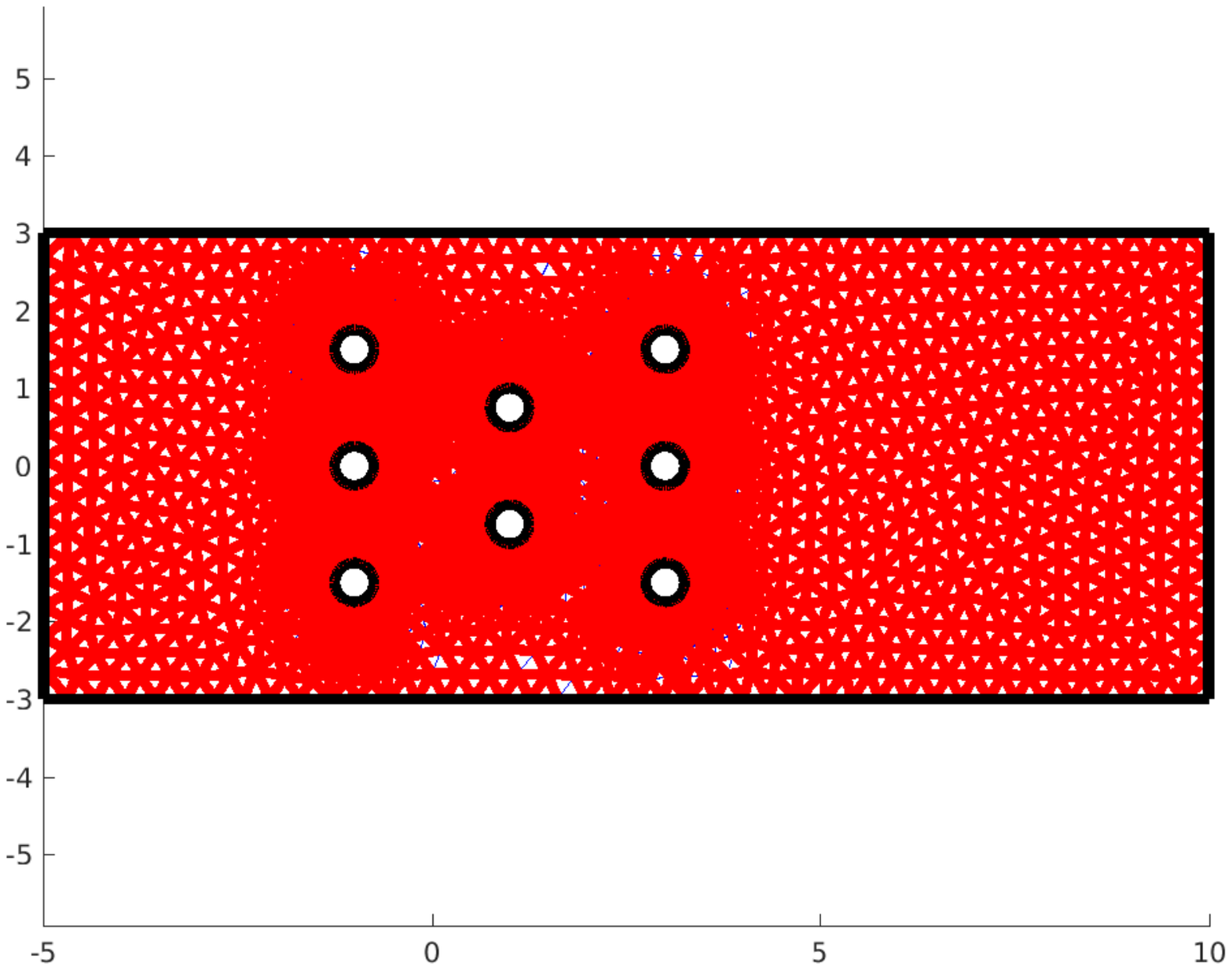}
}
\subfigure[Level 5]{
\includegraphics[trim=3.5cm 11.15cm 2cm 10.65cm,clip=true,width=0.3\columnwidth]{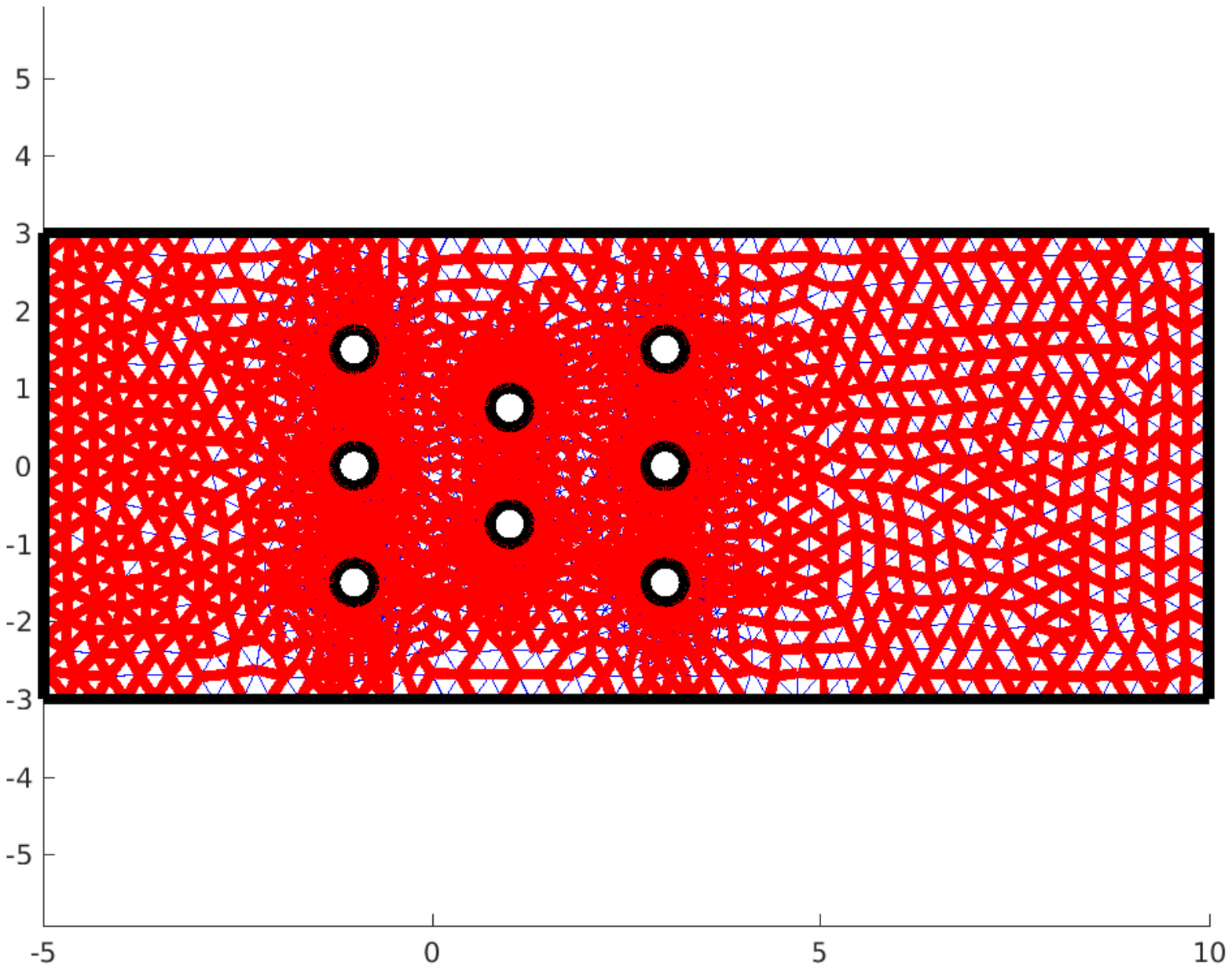}
}
\subfigure[Level 4]{
\includegraphics[trim=3.5cm 11.15cm 2cm 10.65cm,clip=true,width=0.3\columnwidth]{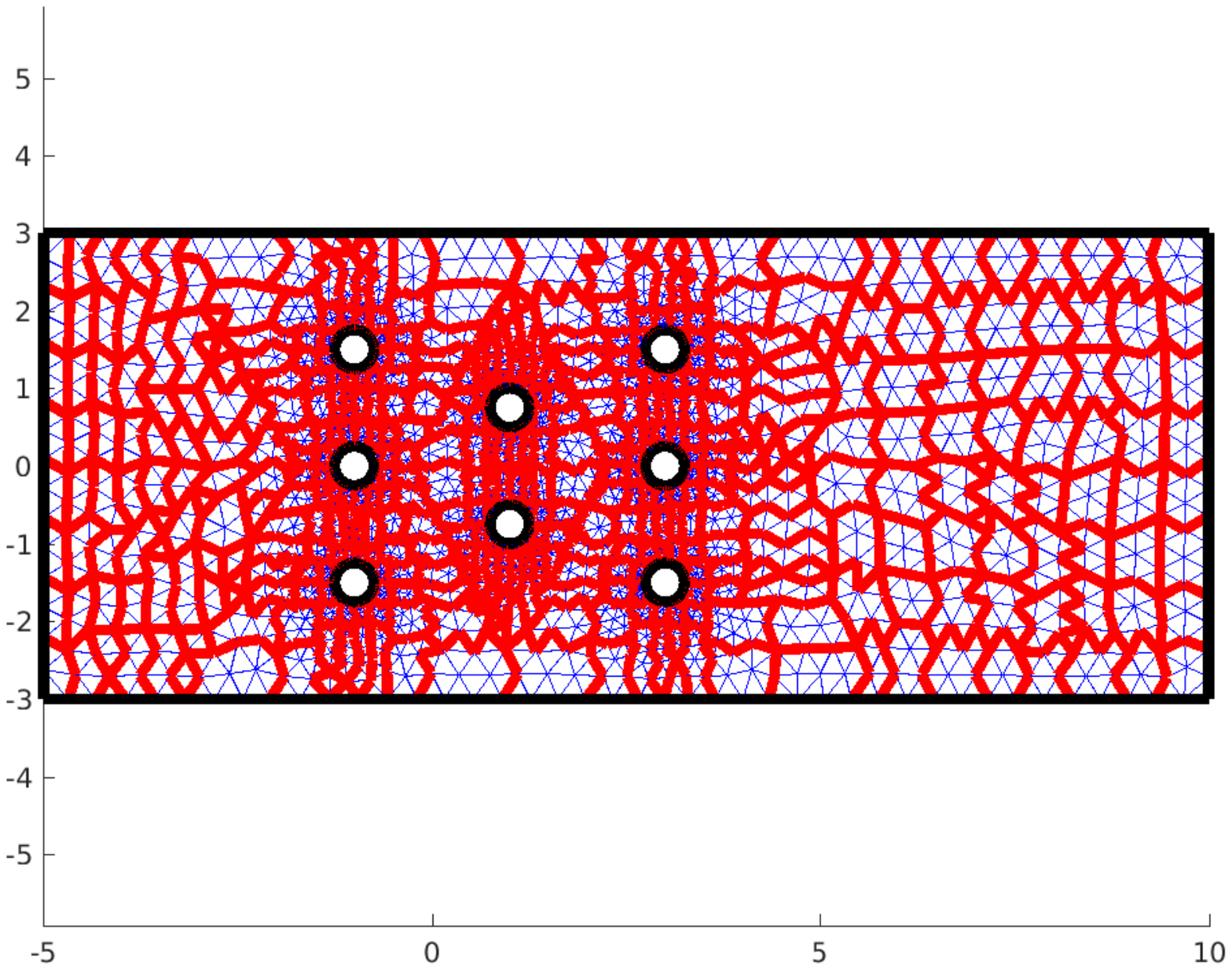}
}
\subfigure[Level 3]{
\includegraphics[trim=3.5cm 11.15cm 2cm 10.65cm,clip=true,width=0.3\columnwidth]{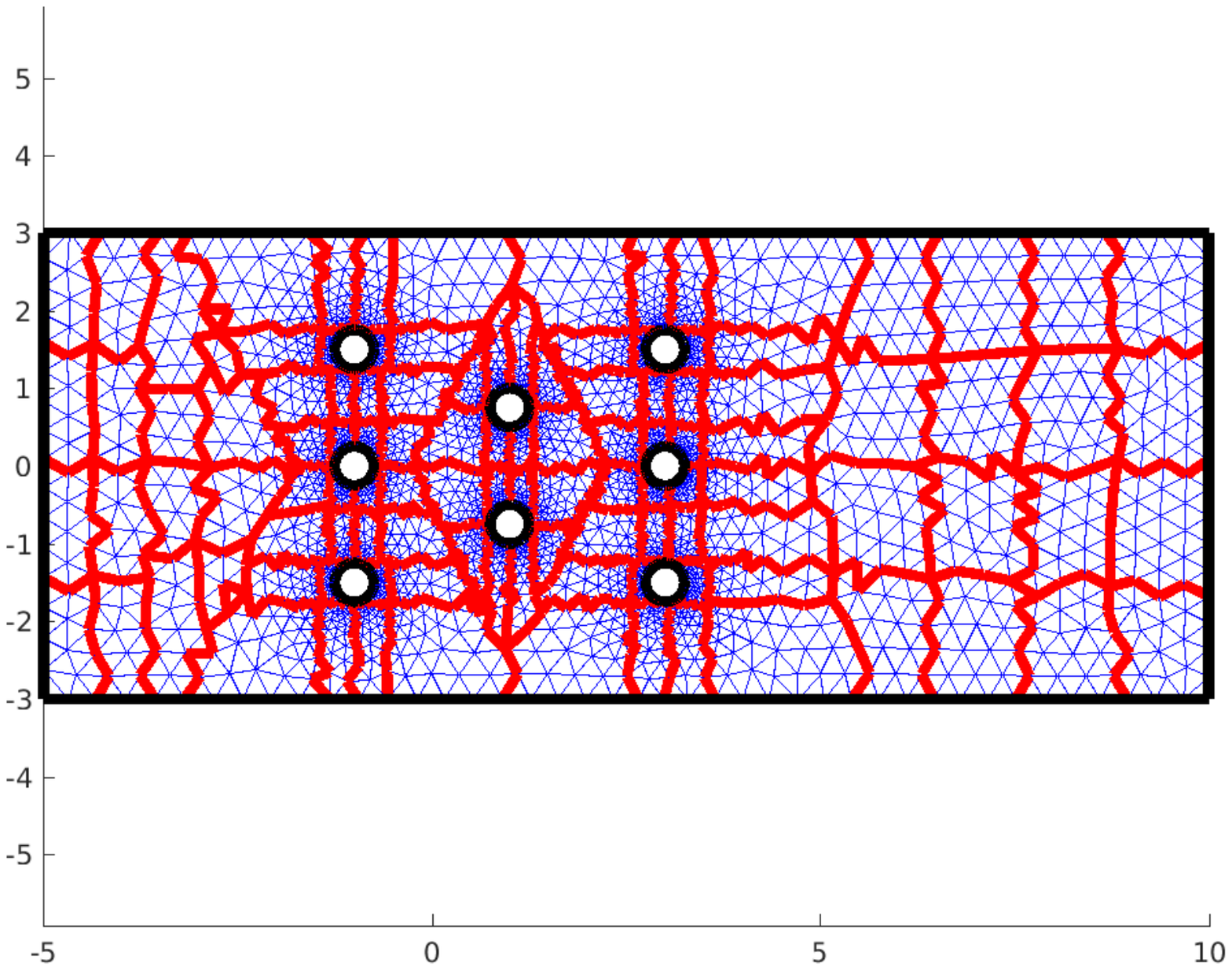}
}
\subfigure[Level 2]{
\includegraphics[trim=3.5cm 11.15cm 2cm 10.65cm,clip=true,width=0.31\columnwidth]{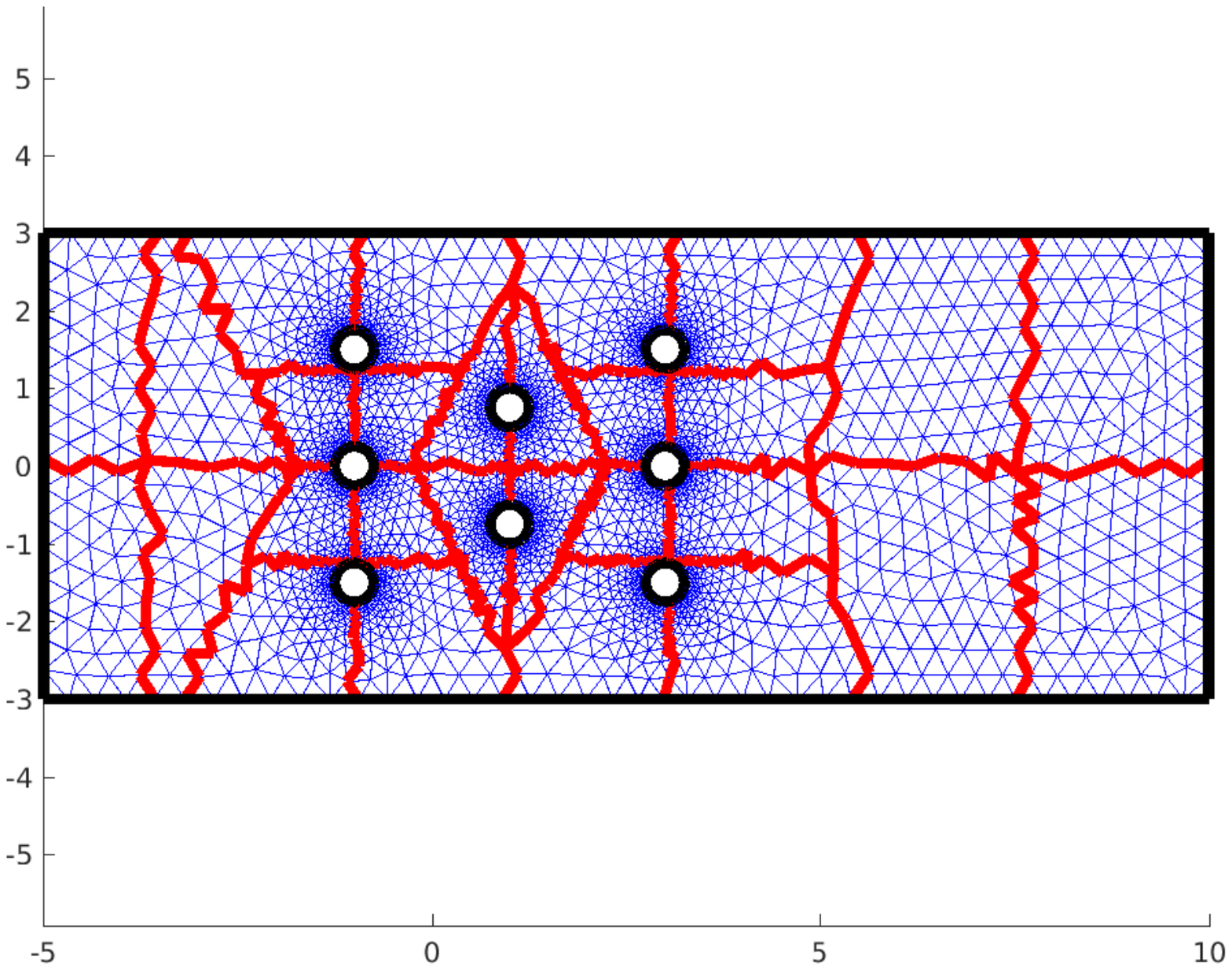}
}
\subfigure[Level 1]{
\includegraphics[trim=2.54cm 13.82cm 7.5cm 9.35cm,clip=true,width=0.29\columnwidth]{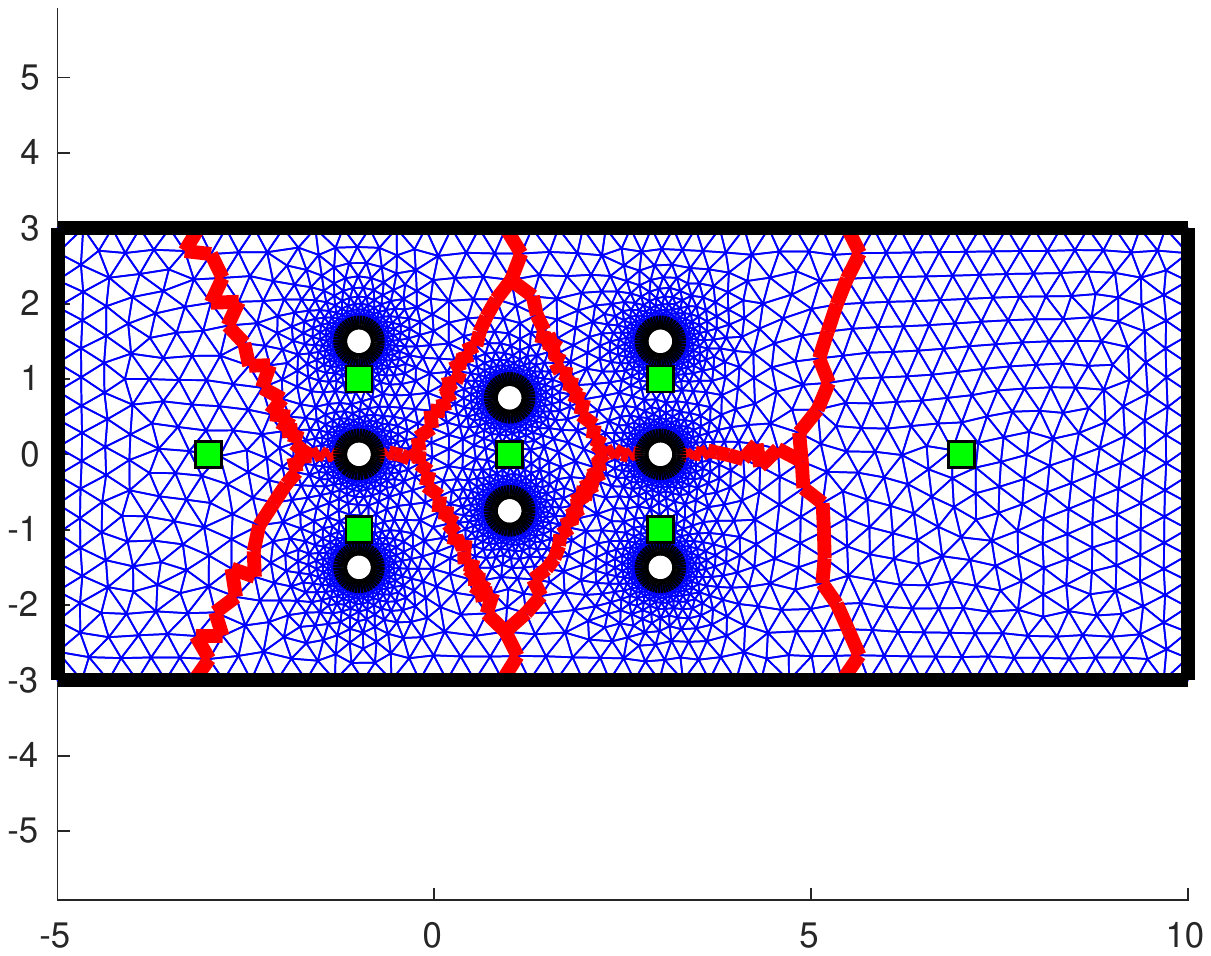}
}
\caption{Example II. Coarsening strategy 1: the seed points are marked with green squares in level $1$.}
\vspace{-7mm}
\figlab{cs1}
\end{figure}
\begin{figure}[h!t!b!]
\subfigure[Level 6]{
\includegraphics[trim=3.2cm 11cm 2cm 10.5cm,clip=true,width=0.3\columnwidth]{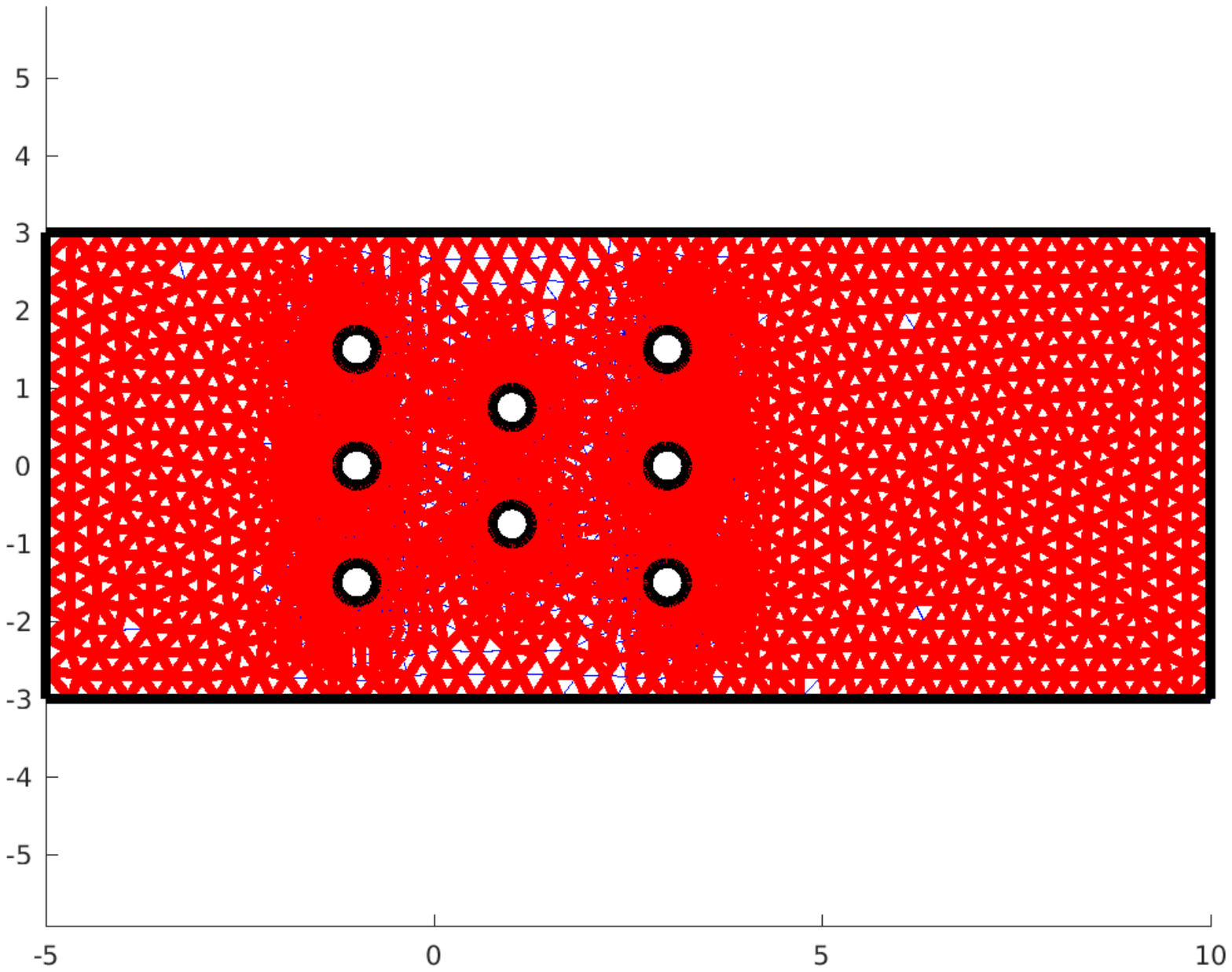}
}
\subfigure[Level 5]{
\includegraphics[trim=3.5cm 11.15cm 2cm 10.65cm,clip=true,width=0.3\columnwidth]{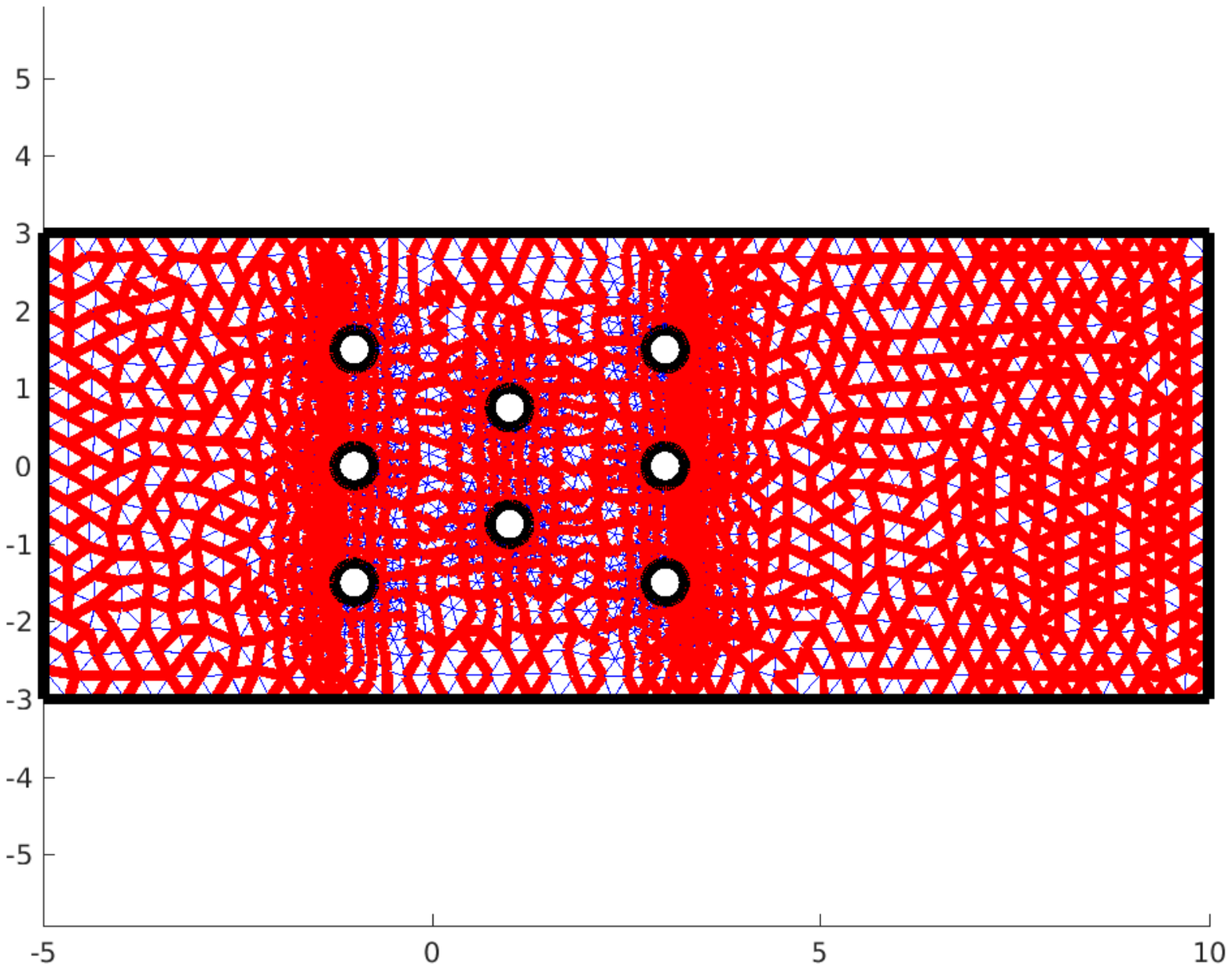}
}
\subfigure[Level 4]{
\includegraphics[trim=3.5cm 11.15cm 2cm 10.65cm,clip=true,width=0.3\columnwidth]{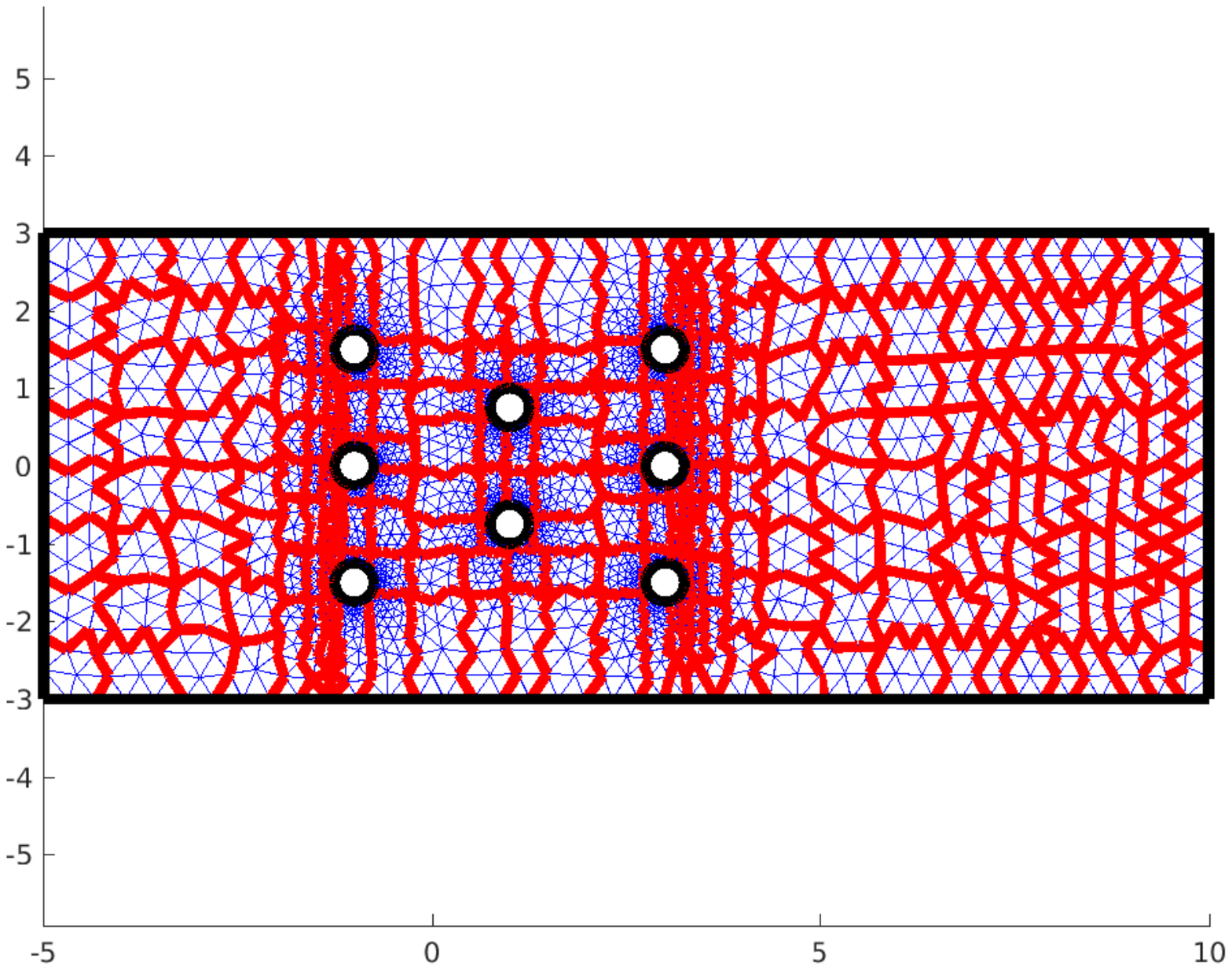}
}
\subfigure[Level 3]{
\includegraphics[trim=3.5cm 11.15cm 2cm 10.65cm,clip=true,width=0.3\columnwidth]{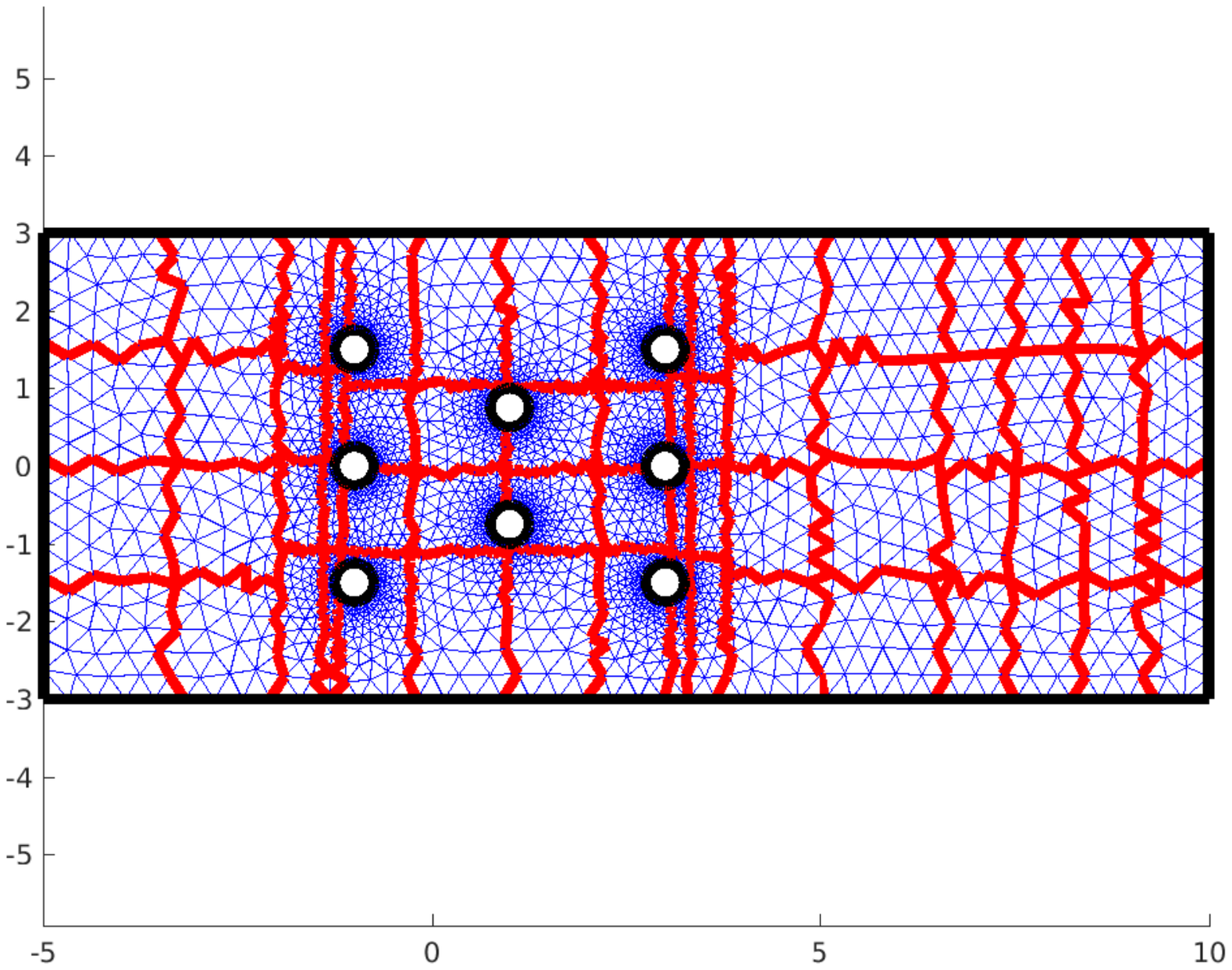}
}
\subfigure[Level 2]{
\includegraphics[trim=3.5cm 11.15cm 2cm 10.65cm,clip=true,width=0.31\columnwidth]{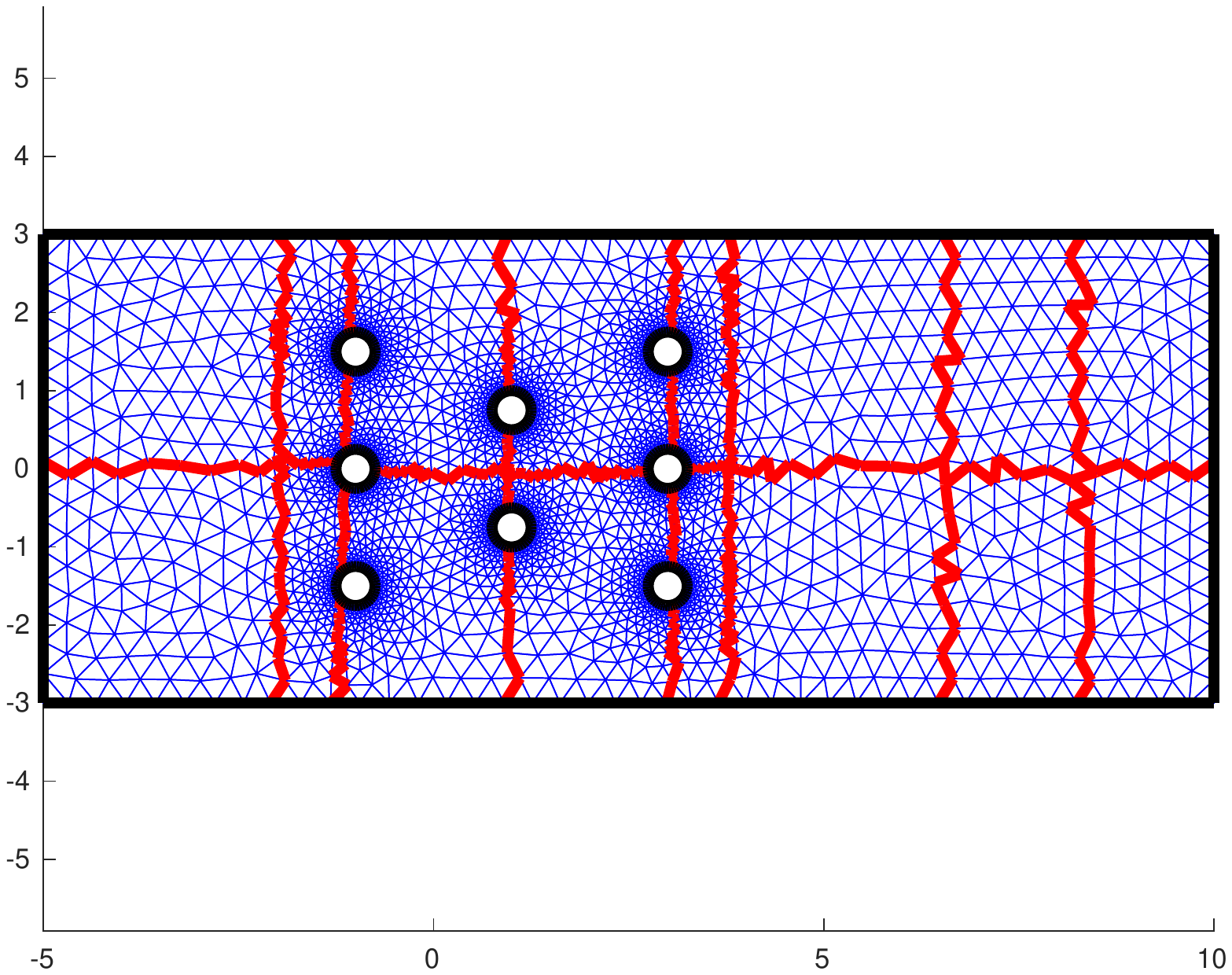}
}
\subfigure[Level 1]{
\includegraphics[trim=2.6cm 14cm 7.5cm 9.3cm,clip=true,width=0.29\columnwidth]{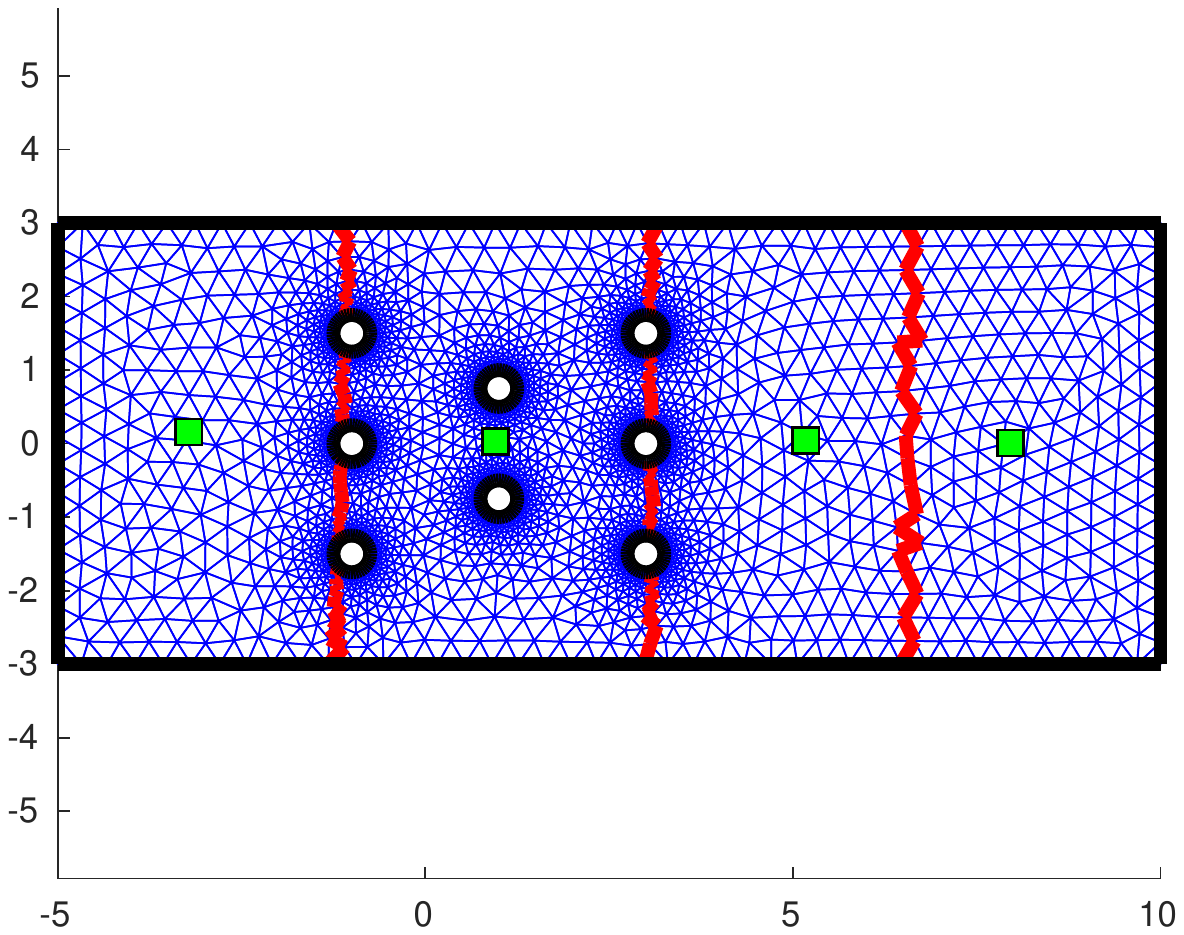}
}
\caption{Example II. Coarsening strategy 2: the seed points are marked with green squares in level $1$.}
\vspace{-7mm}
\figlab{cs2}
\end{figure}
Part of our future work
is to explore the coarsening strategies similar to the ones in
\cite{venkatakrishnan1993unstructured,venkatakrishnan1995agglomeration}.
This may give better coarsened levels with the number of macro-elements 
 reduced/increased by a constant factor between successive levels. The intergrid transfer
operators necessary for moving  residuals/errors between  levels is described next.

\subsection{Intergrid transfer operators}
\seclab{intergrid_transfer}
In standard nested multigrid algorithms, the intergrid transfer
operators are straightforward.  A popular approach is to take
injection for the prolongation and its adjoint for the
restriction. For skeletal system care must be taken in the
construction of these operators to ensure the convergence of the multigrid algorithm under consideration.
In the following we
propose ``physics-based energy-preserving'' operators using the fine scale DtN maps.

\subsubsection{Prolongation} The prolongation operator, $\I_k: M_{k-1}\longrightarrow M_{k}$, transfers the error
from the grid level $k-1$ to the finer grid level $k$.
Note that standard prolongation using injection would set the values on the interior edges to
 zero, and thus does not work.
For our multigrid algorithm it is defined as a function of the solution order $\p$ using  DtN maps. In particular:
\begin{itemize}
\item for $\p=1$,

    \begin{equation}
          \I_k:=
          \LRs{
              \begin{array}{c}
               -A_{k,II}^{-1} A_{k,IB} J_k \\
                  J_k
                \end{array}
            } \quad \text{for} \quad k=2,\cdots,N,
        \label{n_1_Ik}     
    \end{equation}
\item and for $\p>1$,

    \begin{equation}
          \I_k:=
          \begin{cases}
              J_N &\quad \text{for} \quad k=N,\\
          \LRs{
              \begin{array}{c}
               -A_{k,II}^{-1} A_{k,IB} J_k \\
                  J_k
                \end{array}
            } &\quad \text{for} \quad k=2,\cdots,N-1.
          \end{cases}
        \label{n_g1_Ik}     
    \end{equation}
\end{itemize}
Here, $J_k: M_{k-1}\longrightarrow M_{k}$ is the injection (interpolation). To understand the idea behind the  prolongation operator, let us consider $\p=1$ in \eqref{n_1_Ik}. 
First, through $J_k$, we interpolate the error
from the grid level $k-1$ to obtain error on the boundary edges of the finer
grid level $k$. 
Then we solve (via the first block in the definition of $\I_k$), for the error on the interior edges 
as a function of the interpolated error on the boundary. For $\p > 1$
the prolongation is defined in \eqref{n_g1_Ik}, namely, we use the same
prolongation as in the case of $\p = 1$ for all levels $k \le N -1$
and interpolate error from piecewise linear polynomials at level $N-1$
to piecewise $\p$th-order polynomials at level $N$.


\subsubsection{Restriction} The restriction operator, $\Q_{k-1}: M_{k}\longrightarrow M_{k-1}$, restricts the residual from level $k$ to
coarser level $k-1$. The popular idea is to define the restriction
operator as the adjoint (with respect to the $L^2$-inner products $\LRa{\cdot,\cdot}_{\E_k}$ and $\LRa{\cdot,\cdot}_{\E_{k-1}}$ in $M_k$ and $M_{k-1}$)  of the prolongation operator,
i.e. $\Q_{k-1}=\I_k^*$, and from our numerical studies (not shown
here) it still works well. However, this purely algebraic procedure,
though convenient, is not our desire.  Here, we construct the restriction
operator $\Q_{k-1}$ such that 
the coarse grid problem, via the Galerkin approximation, 
is exactly a discretized DtN problem on level $k-1$. 
To that end, let us define $\Q_{k-1}$ as:

\begin{itemize}
\item for $\p=1$,
\begin{equation}
    \Q_{k-1}:=
          \LRs{
              \begin{array}{c}
                  -J_k^* A_{k,BI} A_{k,II}^{-1} \quad 
                  J_k^*
                \end{array}
            } \quad \text{for} \quad k=2,\cdots,N,
          \label{n_1_Qk}
    \end{equation}
\item for $\p>1$,
\begin{equation}
    \Q_{k-1}:=
          \begin{cases}
              J_N^* &\quad \text{for} \quad k=N,\\
          \LRs{
              \begin{array}{c}
                  -J_k^* A_{k,BI} A_{k,II}^{-1} \quad
                  J_k^*
                \end{array}
            } &\quad \text{for} \quad k=2,\cdots,N-1.
          \end{cases}
          \label{n_g1_Qk}
    \end{equation}
\end{itemize}
Note that  if $A$ is
symmetric, then our defintion of the restriction operator $\Q_{k-1}$ is indeed the adjoint of the prolongation operator  $\I_{k}$. 

Given the prolongation and
restriction operators, we obtain our coarse grid equation using
the discrete Galerkin approximation \cite{trottenberg2000multigrid}
\begin{equation}
  \eqnlab{coarseEqn}
  \Q_{k-1}A_{k}\I_{k} \lambda_{k-1} = \Q_{k-1} g_k,
\end{equation}
where the coarse grid operator
\begin{equation}
    \label{coarse}
 A_{k-1} := \Q_{k-1}A_{k}\I_{k},
\end{equation}
for either $\p = 1$ and $k \le N$ or $\p > 1$ and $k \le N-1$, reads
\begin{multline}
 A_{k-1} = \LRs{
              \begin{array}{c}
                  -J_k^* A_{k,BI} A_{k,II}^{-1} \quad 
                  J_k^*
                \end{array}
 }
 \LRs{
    \begin{array}{cc}
      A_{k,II} & A_{k,IB} \\
      A_{k,BI} & A_{k,BB}
    \end{array}
  }
  \LRs{
    \begin{array}{c}
      -A_{k,II}^{-1} A_{k,IB} J_k \\
      J_k
    \end{array}
  }
  \\
  = J_k^*\LRp{A_{k,BB}-A_{k,BI}A_{k,II}^{-1}A_{k,IB}}J_k,
  \eqnlab{Akm1}
\end{multline}
and
\begin{equation}
  \eqnlab{ANm1}
  A_{N-1} = J_N^*A_NJ_N,
\end{equation}
for $\p > 1$ and $k = N$.

{\it Energy preservation:} In order for the multigrid algorithms
  to converge the intergrid operators should be constructed in such a
  way that the ``energy'' does not increase when transferring information
  between a level to a finer one \cite{bramble1993multigrid,Gopalakrishnan09aconvergent}.
Note that ``energy'' here need not necessarily  be associated with some physical energy. 
Indeed, if we associate $A_k$
 with a 
  bilinear form $a_k\LRp{.,.}$ such that $a_k\LRp{\kappa,\mu}= \LRa{A_k \kappa,\mu}_{\E_k}, \forall \kappa,\mu \in M_{k}$, where again $\LRa{.,.}_{\E_k}$ represents the L$^2$ inner product on $\E_k$, then we call $a_k\LRp{\kappa,\kappa}$ the energy on level $k$ associated with $\kappa$. Non-increasing energy means
\[
    a_k\LRp{\I_k\lambda,\I_k\lambda}\leq a_{k-1}\LRp{\lambda,\lambda} \quad \forall \lambda \in M_{k-1}, \quad \forall k=2,3,\cdots,N. 
\]
\begin{proposition}[Energy preservation]
  The proposed multigrid approach preserves the energy in the following sense: $\forall k=2,3,\cdots,N$,
  \begin{equation}
    \eqnlab{energyConser}
    a_k\LRp{\I_k\lambda,\I_k\lambda}=a_{k-1}\LRp{\lambda,\lambda} \quad \forall \lambda \in M_{k-1}. 
\end{equation}
  \end{proposition}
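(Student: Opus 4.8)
The plan is to prove the identity \eqnref{energyConser} by direct substitution using the definitions of $\I_k$, $\Q_{k-1}$, and the Galerkin coarse operator $A_{k-1}=\Q_{k-1}A_k\I_k$ in \eqref{coarse}, exploiting the fact that the restriction is (up to the possible lack of symmetry in $A$) the adjoint of the prolongation. The observation that makes everything work is that $a_k\LRp{\I_k\lambda,\I_k\mu}=\LRa{A_k\I_k\lambda,\I_k\mu}_{\E_k}$, and since $\Q_{k-1}=\I_k^*$ with respect to the relevant $L^2$ inner products (as noted right after \eqref{n_g1_Qk}, at least in the symmetric case, and in general by the way $\Q_{k-1}$ is constructed with the $J_k^*$ blocks), we can move $\I_k$ across the inner product to get $\LRa{\I_k^*A_k\I_k\lambda,\mu}_{\E_{k-1}} = \LRa{A_{k-1}\lambda,\mu}_{\E_{k-1}} = a_{k-1}\LRp{\lambda,\mu}$. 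Taking $\mu=\lambda$ yields the claim. So the proof reduces to verifying that $A_{k-1}=\Q_{k-1}A_k\I_k$ really does act as the bilinear form $a_{k-1}$ via the inner product $\LRa{\cdot,\cdot}_{\E_{k-1}}$, which is exactly how $A_{k-1}$ was defined to be the operator on level $k-1$.

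Concretely, I would split into the two cases appearing in the definitions. For $\p=1$ and all $k=2,\dots,N$ (and for $\p>1$ with $k\le N-1$), substitute \eqref{n_1_Ik} and \eqref{n_1_Qk} into $\LRa{A_k\I_k\lambda,\I_k\lambda}_{\E_k}$. Writing $\I_k\lambda = \bigl(-A_{k,II}^{-1}A_{k,IB}J_k\lambda,\;J_k\lambda\bigr)^{\!\top}$ and expanding the block product $\LRa{A_k\I_k\lambda,\I_k\lambda}_{\E_k}$ using \eqref{trace_partition}, the cross terms combine and one recovers precisely $\LRa{J_k^*\LRp{A_{k,BB}-A_{k,BI}A_{k,II}^{-1}A_{k,IB}}J_k\lambda,\lambda}_{\E_{k-1}}$, which by \eqnref{Akm1} equals $\LRa{A_{k-1}\lambda,\lambda}_{\E_{k-1}}=a_{k-1}\LRp{\lambda,\lambda}$. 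For $\p>1$ and $k=N$, the computation is immediate: $\I_N=J_N$, so $a_N\LRp{\I_N\lambda,\I_N\lambda}=\LRa{A_NJ_N\lambda,J_N\lambda}_{\E_N}=\LRa{J_N^*A_NJ_N\lambda,\lambda}_{\E_{N-1}}=a_{N-1}\LRp{\lambda,\lambda}$ by \eqnref{ANm1}. The algebra in the first case is the same Schur-complement cancellation already carried out in the displayed derivation of \eqnref{Akm1}, so it is essentially bookkeeping rather than a new computation.

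The one point that needs care — and the closest thing to an obstacle — is the relationship between the adjoint and the inner products when $A$ is not symmetric. The paper only asserts $\Q_{k-1}=\I_k^*$ when $A$ is symmetric, yet the proposition is stated for all the hybridized methods, including the nonsymmetric NIPG-H case. However, inspecting the definitions, the energy identity \eqnref{energyConser} involves only the quadratic form $\kappa\mapsto a_k\LRp{\kappa,\kappa}$, which depends solely on the symmetric part of $A_k$; and more to the point, the computation proving $A_{k-1}=J_k^*\LRp{A_{k,BB}-A_{k,BI}A_{k,II}^{-1}A_{k,IB}}J_k$ in \eqnref{Akm1} is a purely algebraic identity that holds regardless of symmetry, because the first (interior) block of $\I_k$ is designed precisely to annihilate the interior residual $A_{k,II}\lambda_{k,I}+A_{k,IB}\lambda_{k,B}=0$. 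Thus I would phrase the proof so that it uses the defining block structure of $\I_k$ and $\Q_{k-1}$ directly, producing $\LRa{A_k\I_k\lambda,\I_k\lambda}_{\E_k}=\LRa{\Q_{k-1}A_k\I_k\lambda,\lambda}_{\E_{k-1}}$ as an algebraic consequence of the $J_k^*$-blocks in $\Q_{k-1}$ matching the $J_k$-blocks in $\I_k$, rather than invoking a general adjoint statement; the symmetry remark then becomes a convenient interpretation rather than a hypothesis the proof depends on.
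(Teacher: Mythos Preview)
Your proposal is correct and follows essentially the same route as the paper: both arguments split into the case $\p=1$ (resp.\ $\p>1$ with $k\le N-1$) and the case $\p>1$, $k=N$, expand $\LRa{A_k\I_k\lambda,\I_k\lambda}_{\E_k}$ using the block structure \eqref{trace_partition} so that the interior component of $A_k\I_k\lambda$ vanishes, and then identify the resulting boundary Schur complement with $A_{k-1}$ via \eqnref{Akm1} (resp.\ \eqnref{ANm1}). Your explicit observation that this block cancellation is what makes the argument go through even when $A_k$ is nonsymmetric (so that $\Q_{k-1}\ne\I_k^*$) is a useful clarification that the paper's proof leaves implicit.
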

\begin{proof}
We proceed first with $\p=1$. From the definition of $A_k$ in
\eqref{trace_partition} and the definition of the prolongation operator $\I_k$ in \eqref{n_1_Ik} we
have
\begin{align*}
  &a_k\LRp{\I_k\lambda,\I_k\lambda}
   \\
    &=\LRa{\LRs{
    \begin{array}{cc}
      -A_{k,II}^{-1} A_{k,IB} J_k\lambda,
      J_k\lambda
    \end{array}
  },\LRs{
    \begin{array}{cc}
      A_{k,II} & A_{k,IB} \\
      A_{k,BI} & A_{k,BB}
    \end{array}
  }
  \LRs{
    \begin{array}{c}
      -A_{k,II}^{-1} A_{k,IB} J_k\lambda \\
      J_k\lambda
    \end{array}
  }}_{\E_k} \\
   &= \LRa{ \LRp{A_{k,BB}-A_{k,BI}A_{k,II}^{-1}A_{k,IB}}J_k \lambda, J_k \lambda}_{\E_k} \\
   &= \LRa{ J_k^*\LRp{A_{k,BB}-A_{k,BI}A_{k,II}^{-1}A_{k,IB}}J_k \lambda, \lambda}_{\E_{k-1}}
            = a_{k-1}\LRp{\lambda,\lambda},
\end{align*}
where the last equality comes from the definition of the coarse grid
operator $A_{k-1}$ in \eqnref{Akm1}.  For $\p>1$, we need to prove
\eqnref{energyConser} only for $k=N$, but this is straightforward since from \eqref{n_g1_Ik},
\eqref{n_g1_Qk}, and \eqnref{ANm1} we have
    \[
        a_N\LRp{\I_N\lambda,\I_N\lambda} = \LRa{A_N J_N\lambda,J_N\lambda}_{\E_{k}} = \LRa{J_N^* A_N J_N\lambda,\lambda}_{\E_{k-1}} = a_{k-1}\LRp{\lambda,\lambda}.
    \]
\end{proof}


  We now prove that the coarse grid operator \eqref{coarse} is also a
 discretized DtN map on every level.

  \begin{proposition}
    \propolab{DtN}
      At every level $k = 1,\hdots,N$, the Galerkin coarse grid operator \eqref{coarse} is a discretized DtN map on that level.
  \end{proposition}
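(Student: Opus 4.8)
The plan is to prove the statement by downward induction on the level $k$, combining the explicit formula \eqnref{Akm1} for the coarse grid operator with the fact — recalled in Section \secref{multigrid_algorithm} — that the finest-level trace operator is a discrete DtN map. For the base case one observes that at $k=N$ the operator $A_N$ (equivalently $A$ in \eqref{trace_linear_system}) is, by construction, the Schur complement of the hybridized system after the volume unknowns $(\ub,\pres)$ have been condensed out element by element; as recalled in Section \secref{multigrid_algorithm}, such a Schur complement is precisely the discrete DtN map associated with the fine mesh and the trace space $M_N$. When $\p>1$ the additional base object is $A_{N-1}=J_N^* A_N J_N$ from \eqnref{ANm1}, i.e. the transfer from piecewise-linear to order-$\p$ traces, which is the degenerate case of the inductive step below in which there are no interior edges (so $S_k=A_{k,BB}=A_k$).

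For the inductive step I would assume $A_k$ is a discrete DtN map on level $k$ and use \eqnref{Akm1} to write $A_{k-1}=J_k^* S_k J_k$, where $S_k:=A_{k,BB}-A_{k,BI}A_{k,II}^{-1}A_{k,IB}$ is the Schur complement of $A_k$ with respect to the interior edges $\E_{k,I}$. Two observations then drive the argument. First, since every edge of $\E_{k,I}$ lies strictly inside a single macro-element of $\T_{k-1}$, and interior edges of distinct coarse macro-elements are not coupled through $A_k$, the block $A_{k,II}$ is block-diagonal over $\T_{k-1}$; hence $A_{k,II}^{-1}$, and therefore $S_k$, retain the assembly structure of an operator over the coarse partition, $S_k=\sum_m S_k^{(m)}$ with one local contribution per coarse macro-element $m$. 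Second, computing $S_k^{(m)}$ amounts to imposing trace data on the skeleton of macro-element $m$, determining the interior traces from $A_{k,II}\lambda_{k,I}=-A_{k,IB}\lambda_{k,B}$ — which is exactly enforcing the discrete conservation conditions \eqref{conservation} on the interior skeleton of $m$ — and reading off the induced flux residual on $\partial m$; by the inductive hypothesis this is the fine-scale DtN map of the model problem restricted to macro-element $m$. Transitivity of static condensation then identifies $S_k$ as the discrete DtN map on $\T_{k-1}$ relative to the boundary trace space $M_{k,B}$, and since $M_{k-1}\subset M_{k,B}$ with $J_k$ the natural injection, $J_k^* S_k J_k$ is its Galerkin (i.e. $L^2$-projected) restriction to $M_{k-1}$, that is, the discretized DtN map on level $k-1$, which closes the induction.

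The hard part will be making rigorous the claim that the Schur complement of a discrete DtN map with respect to a subset of interface unknowns is again a discrete DtN map on the coarser macro-partition. Concretely this needs (i) invertibility of $A_{k,II}$ restricted to each coarse macro-element, so that the local condensation problems are well posed — which should follow from the well-posedness of the trace system \eqref{trace_linear_system} cited in Section \secref{multigrid_algorithm} together with the block-diagonal structure noted above — and (ii) a careful verification that eliminating $\lambda_{k,I}$ is equivalent, macro-element by macro-element, to enforcing \eqref{conservation} on the interior skeleton, so that what remains on each coarse boundary is genuinely the Dirichlet-to-conserved-Neumann correspondence. Once these two points are in place, the remainder is bookkeeping on the block partition \eqref{trace_partition}, which I would only sketch.
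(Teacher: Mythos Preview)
Your proposal is correct and follows essentially the same inductive approach as the paper: establish that $A_N$ is a discrete DtN map by construction of the hybridized methods, then pass from $A_k$ to $A_{k-1}$ by recognizing the Schur complement in \eqnref{Akm1} as the elimination of interior trace unknowns on each macro-element, hence again a discrete DtN map after Galerkin restriction by $J_k$. The paper's own proof is in fact terser than yours---it treats the Schur-complement-equals-DtN step as essentially definitional and does not spell out the block-diagonal structure of $A_{k,II}$ over macro-elements or the local solvability issues you flag as the hard part.
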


  \begin{proof}
    The proof is a straightforward induction using the proposed coarsening strategy
    and the definition of the intergrid transfer operators.  First, consider
    the case of $\p=1$.   The fact that $A_N
    = A$ in \eqref{trace_linear_system} is a discretized DtN map on
    the finest level is clear by the definition of the hybridized
    methods.
 Assume at level $k$ the operator $A_k$ in \eqref{trace_partition} is
 a discretized DtN map. Taking $\lambda_{k,B} = J_k\lambda_{k-1}$ and
 condensing $\lambda_{k,I}$ out yield
 \[
 \LRp{A_{k,BB}-A_{k,BI}A_{k,II}^{-1}A_{k,IB}}J_k\lambda_{k-1} = g_{k,B}-A_{k,BI}A_{k,II}^{-1}g_{k,I}
 \]
 which, after restricting on the coarse space $M_{k-1}$ using $J^*_{k}$, becomes
 \[
 J_k^*\LRp{A_{k,BB}-A_{k,BI}A_{k,II}^{-1}A_{k,IB}}J_k\lambda_{k-1} = J^*_k\LRp{g_{k,B}-A_{k,BI}A_{k,II}^{-1}g_{k,I}},
 \]
 which is exactly the coarse grid equation \eqnref{coarseEqn}. That
 is, $A_{k-1}$ is the Schur complement obtained by eliminating all the trace unknowns inside all the macro-elements on level $k-1$. By definition, the coarse grid operator
 $A_{k-1}$ on level $\LRp{k-1}$ is also a discrete DtN map.

 For the case of $\p>1$, it is sufficient to show that $A_{N-1}$ is a
 discrete DtN map, but this is clear by: 1) taking $\lambda = J_N
 \lambda_{N-1}$ in \eqref{trace_linear_system}, 2) restricting both
 sides to $M_{N-1}$ using $J^*_N$, 3) recalling that $A_N$ is a discretized DtN
 map, and 4) observing that the resulting equation coincides with the
 coarse grid equation \eqnref{ANm1}.
  \end{proof}

\subsection{Smoothing}
\seclab{smoothing}
Let us denote the smoothing operator at level $k$ by $G_{k,m_k}$, where
$m_k$ stands for the number of smoothing steps performed at level
$k$. We take $m_k$ to satisfy
$$\beta_0m_k \leq m_{k-1} \leq \beta_1m_k,$$ with $\beta_1\ge \beta_0
> 1$. That is, the number of smoothing steps are increased as the mesh
is coarsened.  
The reason for this choice is based on the theoretical analysis for
non-nested multigrid methods in \cite{bpx1991}. However, as numerically shown
later in Section \secref{exp5}, even constant number of smoothing
steps at all levels, e.g. $2$, works well.

\subsection{Local correction operator}
\seclab{local_correction}
To motivate the need of local correction operator, let us consider the following decomposition of the skeletal space
$M_k = {\overline M}_k \oplus \hat{M}_k$ such that
     \[
         \lambda_k = {\overline\lambda}_k + \hat{\lambda}_k, \quad {\overline\lambda}_k \in {\overline M}_k \text{ and } \hat{\lambda}_k \in  \hat{M}_k,
     \]
where ${\overline\lambda}_k= {\overline\lambda}_{k,I}$ is given by the local correction $T_k: M_{k}\longrightarrow M_{k}$:
\begin{equation}
  \eqnlab{localCorrection}
         {\overline\lambda}_k := T_k          \LRs{
             \begin{array}{c}
                 g_{k,I} \\
                 0
             \end{array}
         } :=
         \LRs{
         \begin{array}{cc}
             A_{k,II}^{-1} & 0 \\
             0 & 0
         \end{array}
         }
         \LRs{
             \begin{array}{c}
                 g_{k,I} \\
                 0
             \end{array}
         },
     \end{equation}
     and the ``global component'' $\hat{\lambda}_k=\LRs{\hat{\lambda}_{k,I} \quad \hat{\lambda}_{k,B}}^T$ by
     \[
         \LRs{
      \begin{array}{cc}
        A_{k,II} & A_{k,IB} \\
          A_{k,BI} & A_{k,BB}
      \end{array}
       }
       \LRs{
       \begin{array}{c}
           \hat{\lambda}_{k,I} \\
           \hat{\lambda}_{k,B}
       \end{array}
       }
       =
       \LRs{
       \begin{array}{c}
           0 \\
           g_{k,B} - A_{k,BI}A_{k,II}^{-1}g_{k,I}
       \end{array}
       }.
   \]
Let us now consider $\p=1$. 
Standard two-grid cycles, for example, include: 1)
smoothing iterations on the system \eqref{trace_linear_system} for the
fine grid, 2) restricting the residual to the coarse grid, 3)
performing the coarse grid correction, and 4) prolongating the error
back to the fine grid. However, with the prolongation operator defined
in equation \eqref{n_1_Ik}, only $\hat{\lambda}_k$  can be recovered directly. In other
words, the burden of capturing $\overline{\lambda}_k$ is left for the smoother and/or
multigrid iterations.
Indeed, in our numerical studies we found that this standard algorithm takes more multigrid iterations and still
converges with Gauss-Seidel and Chebyshev accelerated Jacobi smoothers.
However, it diverges with block-Jacobi smoother at low orders as shown in Table \ref{tab:HDG_tau1_bj_without_local}.

This issue can be fixed using the concept of subspace 
correction \cite{xu1992iterative,trottenberg2000multigrid}. We can perform a local correction
in the subspace ${\overline M}_k$, as in \eqnref{localCorrection}, either before or after the coarse-grid correction (or both if 
symmetry is desirable). 
With this local correction, we have observed that even with
Jacobi-type smoothers the multigrid algorithm converges.
 It is important to point out that the application of the local correction
operator $T_k$ is completely local to each macro-element at the $\LRp{k-1}$th level, and
thus can be carried out in parallel.


\subsection{Relationship to AMG operators}

The intergrid transfer operators in section
\secref{intergrid_transfer} and the local correction operator in
\secref{local_correction} are closely related to the ideal
interpolation and smoothing operators in algebraic multigrid (AMG)
literature \cite{trottenberg2000multigrid}. However, the ideal
operators in AMG lead to a direct solver strategy, namely nested
dissection \cite{george1973nested} and suffer from large memory
requirements. This renders the algorithm impractical for large scale
simulations especially in 3D. In that respect, the coarsening in our
operators is the key as it leads to an $\mc{O}(N)$ iterative algorithm
and can be applied to large scale problems. In fact, this approach is
pursued in the context of finite differences in
\cite{reusken1994multigrid,reusken1993multigrid,dendy1982black,de1990matrix}
under the name of Schur complement multigrid methods.  However, our
approach is more general in the sense that it can be applied to any
unstructured mesh, whereas all the previous works deal with
structured meshes. As shown in
Figure \ref{fat_box}, it does not require the meshes to be nested. In fact, our approach can be considered as a combination
of AMG and geometric multigrid methods. As such it benefits from the
robustness of AMG and the fixed coarse grid construction costs of
geometric multigrid.

\subsection{Multigrid V-cycle algorithm}
We are now in the position to define our multigrid algorithm. 
We begin with a fixed point scheme on the finest level:
$$\lambda^{i+1} = \lambda^i + B_N(r_N), \quad i = 0,\hdots$$
where $r_N = g_N - A_N\lambda^i$ and  the action of $B_N$ on a function/vector is defined recursively using the multigrid algorithm \ref{al:multigrid}. 

\begin{algorithm}
  \begin{algorithmic}[1]
 \STATE {\it Initialization}: \\
	$e^{\{0\}} = 0,$ 
 \STATE {\it Presmoothing}: \\ 
	$e^{\{1\}} = e^{\{0\}} + G_{k,m_k}\left(r_k-A_ke^{\{0\}}\right),$
\STATE {\it Local Correction}: \\ 
	$e^{\{2\}} = e^{\{1\}} + T_k\left(r_k-A_ke^{\{1\}}\right),$
 \STATE {\it Coarse Grid Correction}: \\ 
	$e^{\{3\}} = e^{\{2\}} + I_k B_{k-1} \LRp{Q_{k-1}\left(r_k-A_ke^{\{2\}}\right)},$
\STATE {\it Local Correction}: \\ 
	$e^{\{4\}} = e^{\{3\}} + T_k\left(r_k-A_ke^{\{3\}}\right),$
 \STATE {\it Postsmoothing}: \\ 
     $B_k\LRp{r_k} = e^{\{5\}} = e^{\{4\}} + G_{k,m_k}\left(r_k-A_ke^{\{4\}}\right).$
\end{algorithmic}
  \caption{Multigrid v-cycle algorithm}
  \label{al:multigrid}
\end{algorithm}
At the coarsest level $M_1$, we set $B_1 = A_1^{-1}$ and the inversion
is computed using direct solver.  Note that both local correction
steps 3 and 5 are presented for the sake of symmetry of the
algorithm. In practice, we use either step 3 or step 5.  For example,
numerical results in Section~\secref{numerical} use only step 3.  
\section{Numerical results}
\seclab{numerical}
In this section, we study the performance of multigrid both as a
solver and as a left preconditioner. Since the global trace system of
NIPG-H and IIPG-H methods are not symmetric, we use preconditioned
GMRES for all the methods to enable a direct comparison.  For all the numerical examples:
1) the stopping
tolerance is taken as $10^{-9}$ for
the normalized residual (normalized by the norm of the right hand side
of \eqref{trace_linear_system}), and 2) following \cite{bpx1991} we choose the smoothing parameters $\beta_0$ and $\beta_1$ (see Section \secref{smoothing}) as $2$.
In all the tables, $``*"$ stands for
either the divergence of the GMRES/multigrid solver or $200$  iterations was already taken
but the normalized residual was still larger than the tolerance. 

In examples 1-5 in Sections \secref{exp1}-\secref{exp5}, we consider HDG and hybridized
IPDG methods. In example 6 in Section \secref{multinumerics}, we consider the case of multnumerics with mixed methods (RT-H)
in some parts of the domain and NIPG-H in other parts. Though we have also tested the multigrid algorithm for hybridized RT1 and RT0
methods for example I and obtained scalable results, they are not shown here due to the page limitation.

\subsection{Example I: Poisson equation in a unit square}
\seclab{exp1}
In this first example, we consider the Poisson equation in a unit
square. We take the exact solution
to be of the form $\pres=x y e^{x^2y^3}$.
Dirichlet boundary condition using the 
exact solution is applied directly (strongly) via the trace unknowns. We consider  hybridized DG methods and study the
performance of multigrid both as a solver and as a preconditioner.
  The number of levels in the multigrid hierarchy and the corresponding number of quadrilateral elements 
    are shown in Table \ref{tab:level_elements_square}.
\begin{table}[h!b!t!]
\begin{center}
\begin{tabular}{ | r || c c  c  c  c  c | }
\hline
Levels &  2 & 3 & 4 & 5 & 6 & 7 \\
\hline
Elements & 16 & 64 & 256 & 1024 & 4096 & 16384 \\
\hline
\end{tabular}
\caption{\label{tab:level_elements_square} Example I: The multigrid hierarchy.} 
\end{center}
\end{table}

We consider the following four different smoothers and compare their
performance for each of the hybridized DG methods.
\begin{itemize}
    \item{Damped point-Jacobi with the relaxation parameter $\omega=2/3$ \cite{sundar2015comparison}.}
    \item{Chebyshev accelerated point-Jacobi method. This smoother requires the estimates of extreme eigenvalues and 
      we compute an approximate maximum eigenvalue $\Lambda_{max}$ with $\mc{O}\LRp{10^{-2}}$-accuracy.
Following \cite{adams2003parallel}, we estimate the smallest
eigenvalue using $\Lambda_{max}/30$. 
    }
    \item{Lower-upper symmetric point Gauss-Seidel method with one forward and one backward sweep
         during each iteration, and  we name this smoother as LU-SGS for simplicity.}
    \item{Block-Jacobi smoother where one block consists of all the
      degrees of freedom corresponding to an edge $\e \in \E_k$ at the $k$th level.
      Here,
      we do not use any damping as it does not make any difference in
      the number of iterations in our numerical studies.}
\end{itemize}

For the stabilization parameters, we consider a mesh-independent value
$\tau=1$ and mesh-dependent form $\tau=1/h_{min}$ for HDG. The former
corresponds to the upwind HDG proposed in
\cite{Bui-Thanh15,Bui-Thanh15a}.  
Following \cite{riviere2001priori},
we take the stabilization to be $\tau=(\p+1)(\p+2)/h_{min}$ for the
hybridized IPDG schemes. In later
examples, when the permeability ${\bf K}$ is spatially varying we also consider
$\tau$ as a function of ${\bf K}$.
We refer to \cite{MR2629996,Bui-Thanh15,cockburn2008superconvergent} for the $h$-convergence order of HDG
schemes and \cite{riviere2001priori} for the IPDG schemes.

\begin{table}[h!b!t!]
\centering
\begin{tabular}{|r|c|c|c|c|c|c||c|c|c|c|c|c|}
\hline
& \multicolumn{6}{c||}{MG as solver} & \multicolumn{6}{c|}{MG
with GMRES}\\
\cline{2-13}
\!\!\! $\p$ \!\!\!\! &  \multicolumn{6}{c||}{\!\!\scriptsize  Levels\!\!} &  \multicolumn{6}{c|}{\!\!\scriptsize Levels\!\!}\\
\hline
& \scriptsize 2 & \scriptsize 3 & \scriptsize 4 & \scriptsize 5 & \scriptsize 6 & \scriptsize 7 & \scriptsize 2 & \scriptsize 3 & \scriptsize 4 & \scriptsize 5 & \scriptsize 6 & \scriptsize 7 \\
\cline{2-13}
1 & 14 & 14 & 14 & 15 & 15 & 15 & 6 & 8 & 8 & 8 & 8 & 8\\
2 & 14 & 14 & 15 & 15 & 15 & 15 & 8 & 8 & 8 & 8 & 8 & 8\\
3 & 15 & 16 & 16 & 16 & 16 & 16 & 8 & 9 & 9 & 9 & 9 & 8\\
4 & 19 & 19 & 19 & 19 & 19 & 19 & 9 & 10 & 10 & 9 & 9 & 9\\
5 & 21 & 21 & 21 & 21 & 21 & 21 & 10 & 10 & 10 & 10 & 10 & 9\\
6 & 23 & 23 & 24 & 24 & 24 & 24 & 10 & 11 & 11 & 11 & 10 & 10\\
7 & 25 & 25 & 25 & 25 & 25 & 25 & 11 & 11 & 11 & 11 & 11 & 11\\
8 & 27 & 27 & 28 & 28 & 28 & 28  & 11 & 12 & 11 & 11 & 11 & 11\\
9 & 30 & 30 & 30 & 29 & 29 & 29  &12  &12  &12  &12  &12  &11 \\
10 &30  &31  &31  &31  &31  &32   &12  &12  &12  &12  &12  &12 \\
\hline
\end{tabular}
\caption{\label{tab:HDG_tau1_J} Example I. HDG with stabilization
  $\tau=1/h_{min}$: the number of iterations for multigrid with point-Jacobi 
    smoother as solver and preconditioner.}
\end{table}

\begin{table}[h!b!t!]
\centering
\begin{tabular}{|r|c|c|c|c|c|c||c|c|c|c|c|c|}
\hline
& \multicolumn{6}{c||}{MG as solver} & \multicolumn{6}{c|}{MG
with GMRES}\\
\cline{2-13}
\!\!\! $\p$ \!\!\!\! &  \multicolumn{6}{c||}{\!\!\scriptsize  Levels\!\!} &  \multicolumn{6}{c|}{\!\!\scriptsize Levels\!\!}\\
\hline
& \scriptsize 2 & \scriptsize 3 & \scriptsize 4 & \scriptsize 5 & \scriptsize 6 & \scriptsize 7 & \scriptsize 2 & \scriptsize 3 & \scriptsize 4 & \scriptsize 5 & \scriptsize 6 & \scriptsize 7 \\
\cline{2-13}
1 & 13 & 14 & 14 & 14 & 14 & 14 & 6 & 8 & 8 & 8 & 8 & 8\\
2 & 9 & 10 & 10 & 10 & 10 & 10 & 6 & 7 & 7 & 7 & 7 & 7\\
3 & 27 & 29 & 30 & 31 & 31 & 31 & 12 & 12 & 12 & 12 & 12 & 12\\
4 & 31 & 29 & 29 & 29 & 28 & 28 & 12 & 12 & 12 & 11 & 11 & 11\\
5 & 31 & 32 & 32 & 32 & 32 & 32 & 13 & 13 & 12 & 12 & 12 & 12\\
6 & 34 & 33 & 32 & 32 & 32 & 31 & 13 & 13 & 12 & 12 & 12 & 11\\
7 & 32 & 32 & 32 & 32 & 32 & 32 & 13 & 12 & 12 & 12 & 12 & 12\\
8 & 33 & 33 & 33 & 33 & 33 & 33 & 13 & 12 & 12 & 12 & 12 & 12\\
9 & 33 & 33 & 33 & 33 & 33 & 33 & 13 & 13 & 13 & 12 & 12 & 12\\
10 & 33 & 33 & 33 & 33 & 33  & 33 &13  &13  &13  &12  &12  &12 \\
\hline
\end{tabular}
\caption{\label{tab:HDG_tau1_cJ} Example I. HDG with stabilization $\tau=1/h_{min}$: the number of iterations for multigrid with Chebyshev accelerated point-Jacobi smoother as solver and preconditioner.}
    \vspace{-2mm}
\end{table}

\begin{table}[h!b!t!]
\centering
\begin{tabular}{|r|c|c|c|c|c|c||c|c|c|c|c|c|}
\hline
& \multicolumn{6}{c||}{MG as solver} & \multicolumn{6}{c|}{MG
with GMRES}\\
\cline{2-13}
\!\!\! $\p$ \!\!\!\! &  \multicolumn{6}{c||}{\!\!\scriptsize  Levels\!\!} &  \multicolumn{6}{c|}{\!\!\scriptsize Levels\!\!}\\
\hline
& \scriptsize 2 & \scriptsize 3 & \scriptsize 4 & \scriptsize 5 & \scriptsize 6 & \scriptsize 7 & \scriptsize 2 & \scriptsize 3 & \scriptsize 4 & \scriptsize 5 & \scriptsize 6 & \scriptsize 7 \\
\cline{2-13}
1 & 5 & 5 & 5 & 5 & 5 & 5 & 4 & 4 & 4 & 5 & 5 & 5\\
2 & 5 & 5 & 5 & 5 & 5 & 5 & 4 & 4 & 4 & 4 & 4 & 4\\
3 & 5 & 6 & 6 & 6 & 6 & 6 & 5 & 5 & 5 & 5 & 5 & 5\\
4 & 6 & 6 & 6 & 6 & 6 & 6 & 5 & 5 & 5 & 5 & 5 & 5\\
5 & 7 & 7 & 7 & 7 & 7 & 7 & 5 & 5 & 6 & 6 & 6 & 5\\
6 & 7 & 7 & 8 & 8 & 8 & 8 & 5 & 6 & 6 & 6 & 6 & 6\\
7 & 8 & 8 & 8 & 8 & 8 & 8 & 6 & 6 & 6 & 6 & 6 & 6\\
8 & 8 & 8 & 9 & 9 & 9 & 9 & 6 & 6 & 6 & 6 & 6 & 6\\
9 & 9 & 9 & 9 & 9 & 9 & 9  &7  &7  &7  &7  &7  &7 \\
10 &9  &9  &10  &10  &10  &10   &7  &7  &7  &7  &7  &7 \\
\hline
\end{tabular}
\caption{\label{tab:HDG_tau1_gs} Example I. HDG with stabilization $\tau=1/h_{min}$: the number of iterations for multigrid with LU-SGS smoother as solver and preconditioner.}
\end{table}

\begin{table}[h!b!t!]
\centering
\begin{tabular}{|r|c|c|c|c|c|c||c|c|c|c|c|c|}
\hline
& \multicolumn{6}{c||}{MG as solver} & \multicolumn{6}{c|}{MG
with GMRES}\\
\cline{2-13}
\!\!\! $\p$ \!\!\!\! &  \multicolumn{6}{c||}{\!\!\scriptsize  Levels\!\!} &  \multicolumn{6}{c|}{\!\!\scriptsize Levels\!\!}\\
\hline
& \scriptsize 2 & \scriptsize 3 & \scriptsize 4 & \scriptsize 5 & \scriptsize 6 & \scriptsize 7 & \scriptsize 2 & \scriptsize 3 & \scriptsize 4 & \scriptsize 5 & \scriptsize 6 & \scriptsize 7 \\
\cline{2-13}
1 & 7 & 7 & 8 & 8 & 8 & 8 & 4 & 5 & 6 & 6 & 6 & 6\\
2 & 6 & 7 & 8 & 8 & 9 & 9 & 4 & 5 & 6 & 6 & 6 & 6\\
3 & 8 & 9 & 9 & 9 & 9 & 9 & 6 & 6 & 6 & 6 & 6 & 6\\
4 & 9 & 10 & 10 & 10 & 10 & 10 & 6 & 7 & 7 & 7 & 7 & 7\\
5 & 11 & 12 & 12 & 12 & 12 & 12 & 6 & 8 & 8 & 8 & 8 & 7\\
6 & 12 & 12 & 13 & 13 & 13 & 13 & 7 & 8 & 8 & 8 & 8 & 8\\
7 & 13 & 14 & 14 & 14 & 14 & 15 & 7 & 8 & 8 & 8 & 8 & 8\\
8 & 14 & 15 & 15 & 15 & 15 & 15 & 8 & 9 & 9 & 9 & 9 & 8\\
9 & 15 & 16 & 16 & 16 & 17 & 17  & 8 & 9 & 9 & 9 & 9 & 9\\
10 &16  &17  &17  &17  &17  &17   &8  &9  &9  &9  &9  & 9\\
\hline
\end{tabular}
\caption{\label{tab:HDG_tau1_bj} Example I. HDG with stabilization $\tau=1/h_{min}$: the number of iterations for multigrid with block-Jacobi smoother as solver and preconditioner.}
    \vspace{-5mm}
\end{table}


%
%
In Tables \ref{tab:HDG_tau1_J}-\ref{tab:HDG_tau1_bj}, we study the
performance of multigrid as a solver and as a preconditioner for HDG
with stabilization $\tau=1/h_{min}$. 
We now present a few important observations from these tables. The
stabilization $\tau=1/h_{min}$ gives $h$-scalable results with all the
smoothers, i.e. the number of required multigrid/GMRES iterations is
almost unchanged when the mesh is refined. However, the actual number of
iterations depends on the effectiveness of the smoother under
consideration. As can be seen in Table \ref{tab:HDG_tau1_J} with
point-Jacobi smoother, the number of iterations increases with
high-order solutions when multigrid is used as a solver. In contrast, the
multigrid preconditioner is effective in keeping the number of
GMRES iterations almost unchanged (the increase is neligible). The results in Table
\ref{tab:HDG_tau1_cJ} shows that Chebyshev acceleration for
point-Jacobi smoother behaves the same as the point-Jacobi smoother in terms of scalability.
However, in terms of the number of iterations 
it takes either more (for multigrid as solver) or
about the same (for multigrid preconditioned GMRES).
Table \ref{tab:HDG_tau1_gs} shows that LU-SGS is the most effective
smoother, requiring the least number of multigrid/GMRES
iterations. Moreover, the number of both multigrid and GMRES iterations
are almost constant as either the mesh is refined or the solution order increases.
  However, it should be pointed out that the LU-SGS smoother requires twice the amount of work compared to
  point-Jacobi for each iteration due to one forward and one
  backward sweep. 
Table \ref{tab:HDG_tau1_bj} shows that
block-Jacobi is the second best smoother in terms of the number of
iterations; otherwise its scalability behavior is similar to the point-Jacobi smoother as the mesh or the solution order is refined.

\begin{table}[h!b!t!]
\centering
\begin{tabular}{|r|c|c|c|c|c|c||c|c|c|c|c|c|}
\hline
& \multicolumn{6}{c||}{MG as solver} & \multicolumn{6}{c|}{MG
with GMRES}\\
\cline{2-13}
\!\!\! $\p$ \!\!\!\! &  \multicolumn{6}{c||}{\!\!\scriptsize  Levels\!\!} &  \multicolumn{6}{c|}{\!\!\scriptsize Levels\!\!}\\
\hline
& \scriptsize 2 & \scriptsize 3 & \scriptsize 4 & \scriptsize 5 & \scriptsize 6 & \scriptsize 7 & \scriptsize 2 & \scriptsize 3 & \scriptsize 4 & \scriptsize 5 & \scriptsize 6 & \scriptsize 7 \\
\cline{2-13}
1 & 6 & 11 & 21 & 40 & 76 & 145 & 4 & 7 & 11 & 15 & 21 & 28\\
2 & 8 & 15 & 28 & 53 & 101 & 193 & 6 & 8 & 12 & 17 & 23 & 31\\
3 & 10 & 18 & 32 & 60 & 113 & * & 7 & 9 & 13 & 18 & 24 & 32\\
4 & 11 & 19 & 34 & 62 & 117 & * & 7 & 10 & 13 & 18 & 24 & 32\\
5 & 13 & 21 & 36 & 66 & 124 & * & 8 & 10 & 14 & 18 & 24 & 32\\
6 & 13 & 22 & 37 & 68 & 126 & * & 7 & 10 & 14 & 18 & 24 & 32\\
7 & 15 & 23 & 39 & 71 & 131 & * & 8 & 11 & 14 & 18 & 24 & 32\\
8 & 15 & 24 & 40 & 72 & 132 & * & 8 & 11 & 14 & 18 & 24 & 31\\
9 & 16 & 25 & 42 & 74 & 136 & * & 8 & 11 & 14 & 18 & 24 & 31\\
10 & 17 & 26 & 43 & 75 & 137 &*  & 8 & 11 & 14 & 18 & 24 & 31\\
\hline
\end{tabular}
\caption{\label{tab:HDG_tau2_gs} Example I. HDG with stabilization $\tau=1$: the number of iterations for multigrid with LU-SGS smoother as solver and preconditioner.}
\end{table}

\begin{table}[h!b!t!]
\centering
\begin{tabular}{|r|c|c|c|c|c|c||c|c|c|c|c|c|}
\hline
& \multicolumn{6}{c||}{MG as solver} & \multicolumn{6}{c|}{MG
with GMRES}\\
\cline{2-13}
\!\!\! $\p$ \!\!\!\! &  \multicolumn{6}{c||}{\!\!\scriptsize  Levels\!\!} &  \multicolumn{6}{c|}{\!\!\scriptsize Levels\!\!}\\
\hline
& \scriptsize 2 & \scriptsize 3 & \scriptsize 4 & \scriptsize 5 & \scriptsize 6 & \scriptsize 7 & \scriptsize 2 & \scriptsize 3 & \scriptsize 4 & \scriptsize 5 & \scriptsize 6 & \scriptsize 7 \\
\cline{2-13}
1 & 5 & 8 & 15 & 27 & 51 & 97 & 4 & 6 & 9 & 12 & 17 & 23\\
2 & 5 & 7 & 9 & 14 & 24 & 42 & 4 & 5 & 7 & 9 & 12 & 15\\
3 & 8 & 9 & 9 & 9 & 15 & 25 & 5 & 6 & 6 & 7 & 9 & 11\\
4 & 9 & 10 & 10 & 10 & 10 & 10 & 6 & 7 & 7 & 7 & 8 & 8\\
5 & 11 & 11 & 12 & 12 & 12 & 12 & 6 & 7 & 7 & 8 & 7 & 7\\
6 & 12 & 12 & 13 & 13 & 13 & 13 & 7 & 8 & 8 & 8 & 8 & 8\\
7 & 13 & 14 & 14 & 14 & 14 & 14 & 7 & 8 & 8 & 8 & 8 & 8\\
8 & 14 & 15 & 15 & 15 & 15 & 15 & 8 & 9 & 9 & 9 & 9 & 8\\
9 & 15 & 16 & 16 & 16 & 17 & 17  &8  &9  &9  &9  &9  &9 \\
10 &16  &17  &17  &17  &17  &17   &8  &9  &9  &9  &9  &9 \\
\hline
\end{tabular}
\caption{\label{tab:HDG_tau2_bj}
Example I. HDG with stabilization $\tau=1$: the number of iterations for multigrid with  block-Jacobi smoother as solver and preconditioner.}
    \vspace{-3mm}
\end{table}


%
%
In Tables \ref{tab:HDG_tau2_gs} and \ref{tab:HDG_tau2_bj}, we present
results for the multigrid solver and GMRES with multigrid preconditioner
when $\tau=1$ is used for the HDG discretization. With the
point-smoothers\footnote{We do not show results for point-Jacobi and
Chebyshev accelerated point-Jacobi smoothers as the behavior of the number of
iterations is very similar to that of the LU-SGS smoother.} (Table
\ref{tab:HDG_tau2_gs}), the number of iterations for the multigrid
solver scales like $\mc{O}(1/h)$ (since the number of iterations is nearly
doubled each time $h$ reduces by half).
Even for GMRES with multigrid as preconditioner, we do not
obtain $h$-scalability with point smoothers.
On the other hand,
the block-Jacobi smoother in Table \ref{tab:HDG_tau2_bj} for $\p \geq
4$ provides $h$-scalable results and with solution orders
greater than $5$ the results are exactly the same as those for
$\tau=1/h_{min}$. 
With respect to solution
orders, similar to  the case of $\tau=1/h_{min}$, we do not obtain perfect
$\p$-scalability with any smoother though the growth of the number of iterations is
very slow with block-Jacobi smoother.

\begin{table}[h!b!t!]
\centering
\begin{tabular}{|r|c|c|c|c|c|c||c|c|c|c|c|c|}
\hline
& \multicolumn{6}{c||}{MG as solver} & \multicolumn{6}{c|}{MG
with GMRES}\\
\cline{2-13}
\!\!\! $\p$ \!\!\!\! &  \multicolumn{6}{c||}{\!\!\scriptsize  Levels\!\!} &  \multicolumn{6}{c|}{\!\!\scriptsize Levels\!\!}\\
\hline
& \scriptsize 2 & \scriptsize 3 & \scriptsize 4 & \scriptsize 5 & \scriptsize 6 & \scriptsize 7 & \scriptsize 2 & \scriptsize 3 & \scriptsize 4 & \scriptsize 5 & \scriptsize 6 & \scriptsize 7 \\
\cline{2-13}
1 & 10 & 12 & 12 & 12 & 12 & 12 & 6 & 8 & 8 & 8 & 8 & 8\\
2 & 8 & 9 & 9 & 9 & 9 & 9 & 6 & 6 & 6 & 6 & 6 & 6\\
3 & 10 & 10 & 10 & 11 & 11 & 11 & 6 & 7 & 7 & 7 & 7 & 7\\
4 & 11 & 12 & 12 & 12 & 13 & 13 & 7 & 8 & 8 & 8 & 8 & 8\\
5 & 13 & 14 & 14 & 14 & 14 & 14 & 7 & 8 & 8 & 8 & 8 & 8\\
6 & 15 & 15 & 16 & 16 & 16 & 16 & 9 & 9 & 9 & 9 & 9 & 9\\
7 & 16 & 17 & 18 & 18 & 18 & 18 & 9 & 9 & 9 & 9 & 9 & 9\\
8 & 18 & 19 & 19 & 20 & 20 & 20  & 10 & 10 & 10 & 10 & 10 & 10\\
9 & 20 & 21 & 21 & 21 & 21 & 22  & 10 & 11 & 10 & 10 & 10 & 10\\
10 & 21 &22  &23  &23  &23  & 23   &11  &11  &11  &11  &11  & 11 \\
\hline
\end{tabular}
\caption{\label{tab:NIPG_J}
Example I. NIPG-H: the number of iterations for multigrid with  point-Jacobi smoother as solver and preconditioner.}
\end{table}

%
%

\begin{table}[h!b!t!]
\centering
\begin{tabular}{|r|c|c|c|c|c|c||c|c|c|c|c|c|}
\hline
& \multicolumn{6}{c||}{MG as solver} & \multicolumn{6}{c|}{MG
with GMRES}\\
\cline{2-13}
\!\!\! $\p$ \!\!\!\! &  \multicolumn{6}{c||}{\!\!\scriptsize  Levels\!\!} &  \multicolumn{6}{c|}{\!\!\scriptsize Levels\!\!}\\
\hline
& \scriptsize 2 & \scriptsize 3 & \scriptsize 4 & \scriptsize 5 & \scriptsize 6 & \scriptsize 7 & \scriptsize 2 & \scriptsize 3 & \scriptsize 4 & \scriptsize 5 & \scriptsize 6 & \scriptsize 7 \\
\cline{2-13}
1 & 4 & 4 & 4 & 4 & 4 & 4 & 3 & 4 & 4 & 4 & 4 & 5\\
2 & 4 & 4 & 4 & 5 & 5 & 5 & 4 & 4 & 4 & 4 & 4 & 4\\
3 & 4 & 5 & 5 & 5 & 5 & 5 & 4 & 4 & 4 & 5 & 5 & 4\\
4 & 5 & 5 & 5 & 5 & 5 & 6 & 4 & 4 & 5 & 5 & 5 & 5\\
5 & 5 & 6 & 6 & 6 & 6 & 6 & 4 & 5 & 5 & 5 & 5 & 5\\
6 & 6 & 6 & 6 & 6 & 6 & 6 & 5 & 5 & 5 & 5 & 5 & 5\\
7 & 6 & 7 & 7 & 7 & 7 & 7 & 5 & 5 & 5 & 5 & 5 & 5\\
8 & 7 & 7 & 7 & 7 & 7 & 7 & 5 & 6 & 6 & 6 & 6 & 6\\
9 & 7 & 7 & 8 & 8 & 8 & 8 & 5 & 6 & 6 & 6 & 6 & 6\\
10 & 7 &8  &8  &8  &8  &8   &6  &6  &6  &6  &6  &6 \\
\hline
\end{tabular}
\caption{\label{tab:NIPG_gs} Example I. NIPG-H: the number of iterations for multigrid with LU-SGS smoother as solver and preconditioner.}
    \vspace{-4mm}
\end{table}

\begin{table}[h!b!t!]
\centering
\begin{tabular}{|r|c|c|c|c|c|c||c|c|c|c|c|c|}
\hline
& \multicolumn{6}{c||}{MG as solver} & \multicolumn{6}{c|}{MG
with GMRES}\\
\cline{2-13}
\!\!\! $\p$ \!\!\!\! &  \multicolumn{6}{c||}{\!\!\scriptsize  Levels\!\!} &  \multicolumn{6}{c|}{\!\!\scriptsize Levels\!\!}\\
\hline
& \scriptsize 2 & \scriptsize 3 & \scriptsize 4 & \scriptsize 5 & \scriptsize 6 & \scriptsize 7 & \scriptsize 2 & \scriptsize 3 & \scriptsize 4 & \scriptsize 5 & \scriptsize 6 & \scriptsize 7 \\
\cline{2-13}
1 & 5 & 7 & 8 & 8 & 8 & 8 & 4 & 6 & 6 & 6 & 6 & 6\\
2 & 6 & 6 & 6 & 6 & 7 & 7 & 5 & 5 & 5 & 5 & 5 & 5\\
3 & 7 & 8 & 8 & 8 & 8 & 8 & 5 & 6 & 6 & 6 & 6 & 6\\
4 & 9 & 9 & 10 & 10 & 10 & 10 & 6 & 7 & 7 & 7 & 7 & 7\\
5 & 10 & 11 & 11 & 11 & 11 & 11 & 6 & 7 & 7 & 7 & 7 & 7\\
6 & 11 & 12 & 12 & 12 & 12 & 12 & 7 & 8 & 8 & 8 & 8 & 7\\
7 & 12 & 13 & 13 & 13 & 13 & 13 & 7 & 8 & 8 & 8 & 8 & 8\\
8 & 13 & 14 & 14 & 14 & 14 & 14 & 7 & 8 & 8 & 8 & 8 & 8\\
9 & 14 & 15 & 15 & 15 & 15 & 15  & 8 & 9 & 9 & 9 & 9 & 8\\
10 & 15 &16  &16  &16  &16  &16   &8  &9  &9  &9  &9  &9 \\
\hline
\end{tabular}
\caption{\label{tab:NIPG_bj}
Example I. NIPG-H: the number of iterations for multigrid with  block-Jacobi smoother as solver and preconditioner.}
\end{table}



We present in Tables \ref{tab:NIPG_J}-\ref{tab:NIPG_bj} the results
for the hybridized NIPG-H scheme with $\tau=(\p+1)(\p+2)/h_{min}$. As can
be seen, the number of iterations for both multigrid as a solver and as a preconditioner is 
 similar to those with HDG using $\tau=1/h_{min}$.  The results
using IIPG-H and SIPG-H are similar and hence are  omitted for brevity.  For all
of the IPDG schemes, using the Chebyshev acceleration increases the
number of iterations and we do not include these results here.

Finally, we test the performance of multigrid without the local correction operator in step 3 of Algorithm
 \ref{al:multigrid}. The results are shown for HDG with $\tau=1/h_{min}$ and block-Jacobi smoother in Table \ref{tab:HDG_tau1_bj_without_local}. As can be seen, for $\p\leq3$ multigrid as solver and also as a preconditioner fails to
 converge for fine mesh sizes. For $\p\geq4$ we see scalable results although the number of iterations are slightly
 more than that in Table \ref{tab:HDG_tau1_bj} with local correction. We also see an odd-even behavior, with even orders
 giving better iteration counts than odd ones. For $\tau=1$, we observed similar results and hence not shown. For the
 hybridized IPDG schemes, with NIPG-H and IIPG-H from $\p\geq2$ onwards we observed scalable results whereas for SIPG-H
 for all orders we obtained scalability without local correction. The iteration counts are again slightly more than the ones obtained with local
 correction. With increase in order, since block-Jacobi smoother gets stronger, even without the local correction operator it
 is sufficient to give scalable results. However, in order to provide
 more robustness and since the local correction operator is inexpensive, in the following test cases we always include
 it in the step-3 of Algorithm \ref{al:multigrid}.

\begin{table}[h!b!t!]
\centering
\begin{tabular}{|r|c|c|c|c|c|c||c|c|c|c|c|c|}
\hline
& \multicolumn{6}{c||}{MG as solver} & \multicolumn{6}{c|}{MG
with GMRES}\\
\cline{2-13}
\!\!\! $\p$ \!\!\!\! &  \multicolumn{6}{c||}{\!\!\scriptsize  Levels\!\!} &  \multicolumn{6}{c|}{\!\!\scriptsize Levels\!\!}\\
\hline
& \scriptsize 2 & \scriptsize 3 & \scriptsize 4 & \scriptsize 5 & \scriptsize 6 & \scriptsize 7 & \scriptsize 2 & \scriptsize 3 & \scriptsize 4 & \scriptsize 5 & \scriptsize 6 & \scriptsize 7 \\
\cline{2-13}
1 & * & * & * & * & * & * & 8 & 18 & 56 & * & * & *\\
2 & 8 & 13 & 21 & 26 & 27 & 28 & 6 & 7 & 9 & 11 & 11 & 11\\
3 & 16 & 60 & * & * & * & * & 7 & 11 & 20 & 39 & 97 & *\\
4 & 10 & 11 & 12 & 13 & 13 & 13 & 7 & 7 & 8 & 8 & 8 & 8\\
5 & 11 & 12 & 12 & 12 & 12 & 12 & 6 & 8 & 8 & 8 & 8 & 7\\
6 & 12 & 12 & 13 & 13 & 13 & 13 & 7 & 8 & 8 & 8 & 8 & 8\\
7 & 15 & 20 & 23 & 25 & 26 & 26 & 9 & 10 & 11 & 11 & 11 & 11\\
8 & 14 & 15 & 15 & 15 & 15 & 15 & 8 & 9 & 9 & 9 & 9 & 8\\
9 & 15 & 16 & 16 & 16 & 17 & 17  & 8 & 9 & 9 & 9 & 9 & 9\\
10 &18  &19  &19  &20  &20  &20   &9  &10  &10  &10  &10  & 10\\
\hline
\end{tabular}
\caption{\label{tab:HDG_tau1_bj_without_local} Example I. HDG with stabilization $\tau=1/h_{min}$: the number of iterations for multigrid with block-Jacobi smoother as solver and preconditioner without local correction.}
    \vspace{-5mm}
\end{table}

In summary, for HDG with $\tau=1/h_{min}$ and multigrid preconditioned GMRES, almost perfect
$hp$-scalability has been observed for all the
smoothers, while for $\tau = 1$ this scalability holds with only block
smoothers together with high solution orders. 
All the hybridized IPDG schemes give scalable results with
$\tau=(\p+1)(\p+2)/h_{min}$ and the number of iterations is very
similar to that of HDG with $\tau=1/h_{min}$. Of all the smoothers
considered in this paper, LU-SGS seems to be the best
smoother in terms of the number of iterations. However, block-Jacobi
smoother, the second best, is more convenient from the implementation
point of view (especially on parallel computing sytems), and for that reason we  show numerical results only for block-Jacobi smoother from now on.
Since the Chebyshev acceleration does not
seem to improve the performance of multigrid by a significant margin, we will not
 use it in the subsequent sections. We would like to mention that the performance of our multigrid approach in terms of iteration counts is better or at least comparable to other existing  multigrids  proposed in the literature \cite{
 Gopalakrishnan09aconvergent,cockburn2014multigrid,kronbichler2018performance,chen2015auxiliary}. Since
 the time to solution depends on many other factors such as the  choice of smoother,
the  number of smoothing steps, and the architecture of the machine we are not able to comment on that aspect at this moment. However, in
  future work we plan to carry out a detailed study between different multigrid algorithms for hybridized methods with respect to simulation of challenging problems.

\subsection{Example II: Unstructured mesh}
\seclab{exp2}
The goal of this section is to test the robustness of the multigrid
algorithm for a highly unstructured mesh. To that end consider the
mesh in Figure \ref{fat_box}, which consists of $6699$ simplices with
an order of magnitude variation in the diameter of elements
i.e. $h_{max} \approx 10 h_{min}$.
Unlike structured meshes, for unstructured meshes with local
clustering of elements (local refinements) it is not straightforward
 to select the number of levels and the ``best'' coarsening
strategy that well balances the number of elements (in the finer level) for
each macro-element. Ideally, given a number of levels we wish to minimize the  number of multigrid/GMRES iterations. Here, we compare two
coarsening strategies with the same number of levels and study the
performance of multigrid as a solver and as a preconditioner. In our
future work, we will explore  the coarsening strategies similar to the
ones in
\cite{venkatakrishnan1993unstructured,venkatakrishnan1995agglomeration}.
Figures \figref{cs1} and
\figref{cs2} show different levels corresponding to the two coarsening
strategies. The total number of levels in both the strategies
is seven and the last level corresponds to the original fine mesh as shown in Figure 
\ref{fat_box}. For solution orders $\p>1$, again, we first restrict the residual
to $\p=1$ and then carry out the geometric coarsening.
There are seven macro-elements in the first level for coarsening strategy $1$, and four macro-elements for strategy $2$.

    We take zero Dirichlet boundary condition, ${\bf K}={\bf I}$ ({\bf I}
is the identity), and $f=1$ for this example.  We now study
the performance of HDG and hybridized IPDG schemes. In Table
\ref{tab:HDG}, we compare the number of iterations taken for HDG with
$\tau_1=1$ and $\tau_2=1/h_{min}$ and solution orders
$\p=1,\hdots,8$. As can be seen, for $\p\in \LRc{1,2,3},$ using
$\tau_1$ takes less number of iterations, and for $\p \ge 4$ both stabilization parameters require almost the same iteration counts. 
For coarsening strategy 2,
since the coarsening happens slightly more aggressive (i.e. less macro
elements at any level) the iteration counts in general are 
higher. However, using multigrid as a preconditioner for GMRES the
difference in the iteration counts for the two strategies is negligible.

In Table \ref{tab:NIPG}, we consider the NIPG-H scheme
with 
$\tau_1=1/h_{min}$ and $\tau_2=(\p+1)(\p+2)/h_{min}$. As can be seen, the multigrid solver diverges for many values of  solution order $\p$. The iterations
for $\tau_1$ are less than that for
$\tau_2$. 
We observe that the iteration counts of multigrid preconditioned GMRES with $\tau_1$, except for solution orders $\p = 2$ and $\p = 3$ (which require more iterations),  are similar to those for HDG. 
When multigrid is used as a preconditioner, except for solution orders equal to two and three,  both the coarsening strategies give similar iteration counts with $\tau_1$. It turns out that the iteration counts for IIPG-H and SIPG-H  are higher than that of HDG and NIPG-H and hence are
omitted for brevity.


Within the settings of this example we conclude that
{\em multigrid}, both
{\em as a solver and as a preconditioner, is more effective  for
HDG than for hybridized IPDG schemes}. It is also relatively more robust with respect to the coarsening strategies and values of stabilization parameter when used as a preconditioner.

\begin{table}[h!b!t!]
\centering
\begin{tabular}{|r|c c|c c||c c|c c|}
\hline
& \multicolumn{4}{c||}{MG as solver} & \multicolumn{4}{c|}{MG
with GMRES}\\
\cline{2-9}
\!\!\! $\p$ \!\!\!\! &  \multicolumn{2}{c|}{\!\!\scriptsize  coarsening strategy 1\!\!} &  \multicolumn{2}{c||}{\!\!\scriptsize coarsening strategy 2\!\!} & \multicolumn{2}{c|}{\!\!\scriptsize coarsening strategy 1\!\!} &  \multicolumn{2}{c|}{\!\!\scriptsize coarsening strategy 2\!\!}\\
\hline
& \scriptsize $\tau_1$ & \scriptsize $\tau_2$ & \scriptsize $\tau_1$ & \scriptsize $\tau_2$ & \scriptsize $\tau_1$ & \scriptsize $\tau_2$ & \scriptsize $\tau_1$ & \scriptsize $\tau_2$ \\
\cline{2-9}
1 & 25 & * & 41 & 108 & 10 & 21 & 13 & 18\\
2 & 20 & 63 & 33 & 37 & 10 & 12 & 12 & 13\\
3 & 23 & 24 & 39 & 41 & 10 & 11 & 13 & 14\\
4 & 27 & 27 & 43 & 45 & 11 & 11 & 14 & 14\\
5 & 30 & 30 & 48 & 49 & 11 & 12 & 15 & 15\\
6 & 33 & 33 & 51 & 52 & 12 & 12 & 15 & 15\\
7 & 36 & 36 & 55 & 55 & 13 & 13 & 16 & 16\\
8 & 38 & 38 & 58 & 58 & 13 & 13 & 16 & 16\\
\hline
\end{tabular}
\caption{\label{tab:HDG}
Example II. HDG with $\tau_1=1$, $\tau_2=1/h_{min}$: the number of iterations for multigrid as solver and preconditioner for both coarsening strategies.}
\end{table}
\begin{table}[h!b!t!]
\centering
\begin{tabular}{|r|c c|c c||c c|c c|}
\hline
& \multicolumn{4}{c||}{MG as solver} & \multicolumn{4}{c|}{MG
with GMRES}\\
\cline{2-9}
\!\!\! $\p$ \!\!\!\! &  \multicolumn{2}{c|}{\!\!\scriptsize  coarsening strategy 1\!\!} &  \multicolumn{2}{c||}{\!\!\scriptsize coarsening strategy 2\!\!} & \multicolumn{2}{c|}{\!\!\scriptsize coarsening strategy 1\!\!} &  \multicolumn{2}{c|}{\!\!\scriptsize coarsening strategy 2\!\!}\\
\hline
& \scriptsize $\tau_1$ & \scriptsize $\tau_2$ & \scriptsize $\tau_1$ & \scriptsize $\tau_2$ & \scriptsize $\tau_1$ & \scriptsize $\tau_2$ & \scriptsize $\tau_1$ & \scriptsize $\tau_2$ \\
\cline{2-9}
1 & 16 & * & 29 & * & 9 & 133 & 11 & 112\\
2 & * & * & * & * & 136 & * & * & *\\
3 & * & * & * & * & 42 & 42 & 49 & 31\\
4 & 22 & * & 37 & * & 10 & 71 & 13 & 62\\
5 & 25 & 67 & 41 & 95 & 11 & 16 & 14 & 19\\
6 & 28 & * & 46 & * & 11 & 28 & 14 & 27\\
7 & 31 & 71 & 49 & 99 & 12 & 17 & 15 & 21\\
8 & 34 & 121 & 53 & 106 & 12 & 20 & 15 & 21\\
\hline
\end{tabular}
\caption{\label{tab:NIPG}
Example II. NIPG-H with $\tau_1=1/h_{min}$, $\tau_2=(\p+1)(\p+2)/h_{min}$: the number of iterations for multigrid as solver and preconditioner for both coarsening strategies.}
\end{table}
\subsection{Example III: Smoothly varying permeability}
\seclab{exp3}
In this example we consider a smoothly varying permeability of the form
\[
    {\bf K} = \kappa {\bf I},
\]
where $\kappa =1+0.5\sin(2\pi x)\cos(3\pi y)$. 
The domain considered is a circle and we take the exact solution again to be of the form $\pres=x y e^{x^2y^3}$. The forcing and the Dirichlet boundary condition are chosen 
corresponding to the exact solution.
The number of levels and the corresponding number of triangular
elements 
in the multigrid hierarchy are shown in Table \ref{tab:level_elements_circle}. 
\begin{table}[h!b!t!]
\begin{center}
\begin{tabular}{ | r || c c  c  c  c  c | }
\hline
Levels &  2 & 3 & 4 & 5 & 6 & 7 \\
\hline
Elements & 84 & 348 & 1500 & 6232 & 25344 & 102160 \\
\hline
\end{tabular}
\caption{\label{tab:level_elements_circle}
Example III: The multigrid hierarchy.}
    \vspace{-5mm}
\end{center}
\end{table}

\begin{table}[h!b!t!]
\centering
\begin{tabular}{|r|c|c|c|c|c|c||c|c|c|c|c|c|}
\hline
& \multicolumn{6}{c||}{MG as solver} & \multicolumn{6}{c|}{MG
with GMRES}\\
\cline{2-13}
\!\!\! $\p$ \!\!\!\! &  \multicolumn{6}{c||}{\!\!\scriptsize  Levels\!\!} &  \multicolumn{6}{c|}{\!\!\scriptsize Levels\!\!}\\
\hline
& \scriptsize 2 & \scriptsize 3 & \scriptsize 4 & \scriptsize 5 & \scriptsize 6 & \scriptsize 7 & \scriptsize 2 & \scriptsize 3 & \scriptsize 4 & \scriptsize 5 & \scriptsize 6 & \scriptsize 7 \\
\cline{2-13}
1 & 18 & 20 & 23 & 21 & 24 & 27 & 9 & 10 & 11 & 11 & 11 & 12\\
2 & 13 & 15 & 18 & 17 & 19 & 20 & 9 & 9 & 10 & 10 & 11 & 11\\
3 & 16 & 18 & 21 & 20 & 23 & 24 & 10 & 10 & 11 & 11 & 11 & 12\\
4 & 18 & 21 & 23 & 23 & 25 & 27 & 10 & 11 & 12 & 12 & 12 & 12\\
5 & 20 & 24 & 26 & 25 & 28 & 29 & 11 & 12 & 12 & 12 & 13 & 13\\
6 & 22 & 27 & 28 & 28 & 31 & 32 & 11 & 13 & 13 & 13 & 13 & 13\\
7 & 24 & 30 & 30 & 30 & 33 & 34 & 12 & 13 & 13 & 13 & 14 & 14\\
8 & 26 & 32 & 32 & 32 & 35 & 36  & 12 & 13 & 14 & 14 & 14 & 14\\
\hline
\end{tabular}
\caption{\label{tab:HDG_smooth_k_tau1}
Example III. HDG with stabilization
  $\tau=1$: the number of iterations for multigrid as solver and preconditioner.}
\end{table}
Table \ref{tab:HDG_smooth_k_tau1} shows the number of multigrid and GMRES iterations for HDG with $\tau=1$.
Note that perfect $h$-scalability with multigrid as a solver is not observed, and this is 
mainly due to the fact that  the number of elements between successive levels is
not exactly increased/decreased by a constant\footnote{This, as argued before, is not trivial for unstructured meshes.}. Again, we see a little growth in the number of
iterations as $\p$ increases for the multigrid solver, while  for GMRES with multigrid preconditioner we
obtain almost perfect $hp$-scalability. We
have also conducted numerical examples with $\tau=1/h_{min}$, $\tau=\kappa/h_{min}$
and observed that the iteration counts (not shown here) are similar to those with $\tau=1$.

Next we consider the IIPG-H scheme with 
$\tau=\kappa(\p+1)(\p+2)/h_{min}$. Table~\ref{tab:IIPG_smooth_k} shows that
the iteration counts are comparable with  HDG results in  Table \ref{tab:HDG_smooth_k_tau1} for 
GMRES with multigrid preconditioner.
The multigrid solver for IIPG-H, on the other hand, has slightly less
iteration counts.
An extra penalization factor of $1.5$ is necessary to obtain well-posed linear systems for SIPG-H, but otherwise
the corresponding results for NIPG-H and SIPG-H are similar and hence are omitted.

\begin{table}[h!b!t!]
  \centering
  \begin{tabular}{|r|c|c|c|c|c|c||c|c|c|c|c|c|}
    \hline
    & \multicolumn{6}{c||}{MG as solver} & \multicolumn{6}{c|}{MG
      with GMRES}\\
    \cline{2-13}
    \!\!\! $\p$ \!\!\!\! &  \multicolumn{6}{c||}{\!\!\scriptsize  Levels\!\!} &  \multicolumn{6}{c|}{\!\!\scriptsize Levels\!\!}\\
\hline
      & \scriptsize 2 & \scriptsize 3 & \scriptsize 4 & \scriptsize 5 & \scriptsize 6 & \scriptsize 7 & \scriptsize 2 & \scriptsize 3 & \scriptsize 4 & \scriptsize 5 & \scriptsize 6 & \scriptsize 7 \\
 \cline{2-13}
      1 & 14 & 15 & 17 & 16 & 18 & 20 & 8 & 9 & 10 & 10 & 10 & 10\\
      2 & 12 & 13 & 16 & 15 & 17 & 18 & 8 & 9 & 9 & 9 & 10 & 10\\
      3 & 14 & 16 & 18 & 18 & 20 & 21 & 9 & 10 & 10 & 10 & 11 & 11\\
      4 & 17 & 19 & 21 & 21 & 23 & 24 & 10 & 10 & 11 & 11 & 12 & 12\\
      5 & 19 & 21 & 23 & 23 & 25 & 26 & 11 & 11 & 12 & 12 & 12 & 12\\
      6 & 21 & 24 & 26 & 25 & 28 & 29 & 11 & 12 & 12 & 12 & 13 & 13\\
      7 & 22 & 26 & 27 & 27 & 30 & 31 & 11 & 13 & 13 & 13 & 13 & 13\\
      8 & 24 & 29 & 29 & 29 & 32 & 33 & 12 & 13 & 13 & 13 & 13 & 14\\
\hline
  \end{tabular}
    \caption{\label{tab:IIPG_smooth_k} Example III. IIPG-H with stabilization $\tau=\kappa(\p+1)(\p+2)/h_{min}$: the number of iterations for multigrid as solver and preconditioner.}
    \vspace{-4mm}
\end{table}

\subsection{Example IV: Highly discontinuous permeability}
\seclab{exp4}
In this example, we test the robustness of the multigrid solver on
a highly discontinuous and spatially varying (six orders of magnitude) permeability field given by ${\bf K} = \kappa
{\bf I}$ with
\[ \kappa = \begin{cases}
10^6, & (x,y) \in (0,0.56)\times (0,0.56) \quad \text{or} \quad (x,y) \in (0.56,1)\times (0.56,1),\\
1, & \text{otherwise},
\end{cases}
\]
in a unit square.

We choose zero Dirichlet boundary condition and $f=1$. In Table
\ref{tab:HDG_discont_k_tau1}, we study the performance of multigrid as
a solver and as a preconditioner for GMRES using HDG with
$\tau=\kappa/h_{min}$. The number of elements and the number of levels
are same as in example I. Similar to the previous examples,
almost perfect $h$- and $\p$-scalabililties are observed with
multigrid preconditioned GMRES, whereas  the
number of iterations increases with the solution order  $\p$ for the multigrid solver.
\begin{table}[h!b!t!]
\centering
\begin{tabular}{|r|c|c|c|c|c|c||c|c|c|c|c|c|}
\hline
& \multicolumn{6}{c||}{MG as solver} & \multicolumn{6}{c|}{MG
with GMRES}\\
\cline{2-13}
\!\!\! $\p$ \!\!\!\! &  \multicolumn{6}{c||}{\!\!\scriptsize  Levels\!\!} &  \multicolumn{6}{c|}{\!\!\scriptsize Levels\!\!}\\
\hline
& \scriptsize 2 & \scriptsize 3 & \scriptsize 4 & \scriptsize 5 & \scriptsize 6 & \scriptsize 7 & \scriptsize 2 & \scriptsize 3 & \scriptsize 4 & \scriptsize 5 & \scriptsize 6 & \scriptsize 7 \\
\cline{2-13}
1 & 6 & 8 & 8 & 9 & 9 & 9 & 2 & 5 & 5 & 5 & 5 & 5\\
2 & 6 & 7 & 9 & 10 & 10 & 11 & 3 & 4 & 5 & 5 & 5 & 5\\
3 & 8 & 9 & 10 & 10 & 10 & 10 & 3 & 5 & 6 & 6 & 5 & 5\\
4 & 10 & 11 & 11 & 11 & 11 & 11 & 4 & 6 & 6 & 6 & 6 & 5\\
5 & 11 & 13 & 13 & 13 & 13 & 13 & 4 & 6 & 7 & 7 & 6 & 6\\
6 & 13 & 14 & 14 & 14 & 14 & 14 & 4 & 6 & 7 & 7 & 7 & 6\\
7 & 14 & 15 & 16 & 16 & 15 & 15 & 5 & 7 & 7 & 7 & 7 & 7\\
8 & 15 & 16 & 17 & 17 & 17 & 17 & 5 & 7 & 8 & 8 & 7 & 7\\
\hline
\end{tabular}
\caption{\label{tab:HDG_discont_k_tau1} Example IV. HDG with stabilization
  $\tau=\kappa/h_{min}$: the number of iterations for multigrid as solver and preconditioner.}
\end{table}
For $\tau=1/h_{min}$ and $\tau=1$ the multigrid solver takes too many iterations to converge for
finer meshes, and for that reason we report the iteration counts only for multigrid preconditioned
GMRES in Table \ref{tab:HDG_discont_k_tau23}.  
\begin{table}[h!b!t!]
\centering
\begin{tabular}{|r|c|c|c|c|c|c||c|c|c|c|c|c|}
\hline
& \multicolumn{6}{c||}{$\tau=1/h_{min}$} & \multicolumn{6}{c|}{$\tau=1$}\\
\cline{2-13}
\!\!\! $\p$ \!\!\!\! &  \multicolumn{6}{c||}{\!\!\scriptsize  Levels\!\!} &  \multicolumn{6}{c|}{\!\!\scriptsize Levels\!\!}\\
\hline
& \scriptsize 2 & \scriptsize 3 & \scriptsize 4 & \scriptsize 5 & \scriptsize 6 & \scriptsize 7 & \scriptsize 2 & \scriptsize 3 & \scriptsize 4 & \scriptsize 5 & \scriptsize 6 & \scriptsize 7 \\
\cline{2-13}
1 & 3 & 5 & 6 & 8 & 11 & 5 & 2 & 5 & 8 & 11 & 15 & 15\\
2 & 3 & 5 & 6 & 5 & 5 & 5 & 3 & 5 & 6 & 7 & 8 & 11\\
3 & 3 & 5 & 6 & 6 & 5 & 5 & 3 & 4 & 5 & 6 & 7 & 8\\
4 & 4 & 5 & 6 & 6 & 6 & 5 & 4 & 5 & 6 & 6 & 7 & 7\\
5 & 4 & 5 & 7 & 7 & 6 & 6 & 4 & 5 & 7 & 7 & 6 & 6\\
6 & 4 & 6 & 7 & 7 & 7 & 6 & 4 & 6 & 7 & 7 & 7 & 6\\
7 & 4 & 6 & 7 & 7 & 7 & 7 & 4 & 6 & 7 & 7 & 7 & 7\\
8 & 4 & 6 & 8 & 8 & 7 & 7 & 4 & 6 & 8 & 8 & 7 & 7\\
\hline
\end{tabular}
\caption{\label{tab:HDG_discont_k_tau23}
Example IV. HDG with stabilizations $\tau=1/h_{min}$ and $\tau=1$: the number of iterations for multigrid preconditioned GMRES.}
\end{table}
Clearly, the number of GMRES iterations is almost insensitive to the
values of the stabilization parameter $\tau$, and the results for
$\tau=1/h_{min}$ in columns $2-7$ of Table
\ref{tab:HDG_discont_k_tau23} are very similar to columns $8-13$ of
Table \ref{tab:HDG_discont_k_tau1} for $\tau=\kappa/h_{min}$.
 Columns $8-13$ of Table
 \ref{tab:HDG_discont_k_tau23} shows that, again with $\tau=1$, high-order solutions provide
 both $h$- and $\p$-scalabilities and the results in these cases are similar to those with $\tau=1/h_{min}$
(see the last four rows of Table  \ref{tab:HDG_discont_k_tau23}). 


Table~\ref{tab:SIPG_discont_k} shows the performance of our multigrid algorithm
 as a solver and as a preconditioner for the SIPG-H scheme with  $\tau=\kappa(\p+1)(\p+2)/h_{min}$.
\begin{table}[h!b!t!]
\centering
\begin{tabular}{|r|c|c|c|c|c|c||c|c|c|c|c|c|}
\hline
& \multicolumn{6}{c||}{MG as solver} & \multicolumn{6}{c|}{MG
with GMRES}\\
\cline{2-13}
\!\!\! $\p$ \!\!\!\! &  \multicolumn{6}{c||}{\!\!\scriptsize  Levels\!\!} &  \multicolumn{6}{c|}{\!\!\scriptsize Levels\!\!}\\
\hline
& \scriptsize 2 & \scriptsize 3 & \scriptsize 4 & \scriptsize 5 & \scriptsize 6 & \scriptsize 7 & \scriptsize 2 & \scriptsize 3 & \scriptsize 4 & \scriptsize 5 & \scriptsize 6 & \scriptsize 7 \\
\cline{2-13}
1 & 8 & 9 & 10 & 11 & 11 & 12 & 3 & 6 & 6 & 6 & 6 & 6\\
2 & 6 & 8 & 9 & 9 & 10 & 10 & 4 & 5 & 5 & 5 & 5 & 5\\
3 & 9 & 10 & 10 & 10 & 10 & 11 & 4 & 6 & 6 & 6 & 6 & 5\\
4 & 10 & 11 & 12 & 12 & 12 & 12 & 4 & 6 & 7 & 6 & 6 & 6\\
5 & 12 & 13 & 13 & 13 & 13 & 13 & 4 & 6 & 7 & 7 & 7 & 6\\
6 & 13 & 14 & 15 & 15 & 15 & 15 & 5 & 7 & 7 & 7 & 7 & 7\\
7 & 14 & 16 & 16 & 16 & 16 & 16 & 5 & 7 & 8 & 8 & 7 & 7\\
8 & 15 & 17 & 17 & 17 & 17 & 17 & 5 & 7 & 8 & 8 & 7 & 7\\
\hline
\end{tabular}
\caption{\label{tab:SIPG_discont_k}
Example IV. SIPG-H with $\tau=\kappa(\p+1)(\p+2)/h_{min}$: the number of iterations for multigrid as solver and preconditioner.}
    \vspace{-4mm}
\end{table}
The results for NIPG-H and IIPG-H are almost identical and are omitted.
As can be observed,  as a preconditioner for GMRES the multigrid algorithm is  both $h$- and $\p$-scalable for all the hybridized DG methods.

\subsection{Example V: SPE10 test case}
\seclab{exp5}

In this example, we consider the benchmark problem Model 2 from the
Tenth Society of Petroleum Comparative Solution Project (SPE10)
\cite{christie2001tenth}. We consider the permeability field ${\bf K} = \kappa
{\bf I}$, where $\kappa$ corresponding to the $75$th layer is shown in left of Figure \ref{perm_pressure}. 
The permeability field varies by six orders of magnitude and is highly
heterogeneous which gives rise to extremely complex velocity
fields. The domain is $1200\times2200$ $[ft^2]$. The mesh has
$60\times220$ quadrilateral elements and the element edges align with
the discontinuties in the permeability. We choose $f=0$, which
corresponds to no source or sink. For the boundary conditions we take
the pressure (\pres) on the left and right faces to be $1$ and $0$,
respectively. On the top and bottom faces no flux boundary condition
$\ub\cdot\n=0$ is applied. From extensive numerical examples we
have observed that the multigrid hierarchy with seven levels results in
the least number of GMRES iterations. From numerical examples I-IV, we see that 
HDG method is relatively the most robust and scalable. In addition,
it provides simultaneous approximations for both
velocity and pressure. Thus, we consider only HDG for this example.

Since multigrid as a solver either converges
very slowly or diverges for a number of
cases in this example, we report results exclusively for the  multigrid preconditioned GMRES.
The pressure field is shown in the right of Figure~\ref{perm_pressure} for $\p=1$.
In Table \ref{tab:tau_1byh_1em9} are  the number of iterations for solution orders $\p\in \LRc{1,2,3,4}$ with
 $\tau=1/h_{min}$ and $\tau=1$.
The
second row shows that,  for $\p=2$, the iterations are much
larger compared to other solution orders. 
By the time of writing, we have not yet found the reason for this
behaviour.
For other solution orders, 
using $\tau = 1$ results in more iterations for $\p=1$ and less for $\p=3,4$ compared to $\tau=1/h_{min}$.
Again, we like to emphasize  that in this example we completely avoid upscaling of the permeability field as our multigrid algorithm is based only on the fine scale DtN maps.

The results in  columns two and three of Table \ref{tab:tau_1byh_1em9} correspond to  geometrically increasing smoothing steps with respect to levels as explained in Section \secref{smoothing}.
In columns four and five, we simply take two pre- and post-smoothing steps in all levels and, as can be seen, 
the iteration counts  are marginally different compared to those resulted from the increasing smoothing steps.
This implies that we can achieve similar
accuracy with less computational cost using constant (here two) pre- and post-smoothing steps in all levels.

\begin{figure}[h!t!b!]
  \subfigure[Permeability field]{
    \includegraphics[trim=1.9cm 6.5cm 2.9cm 6.4cm,clip=true,width=0.5\textwidth]{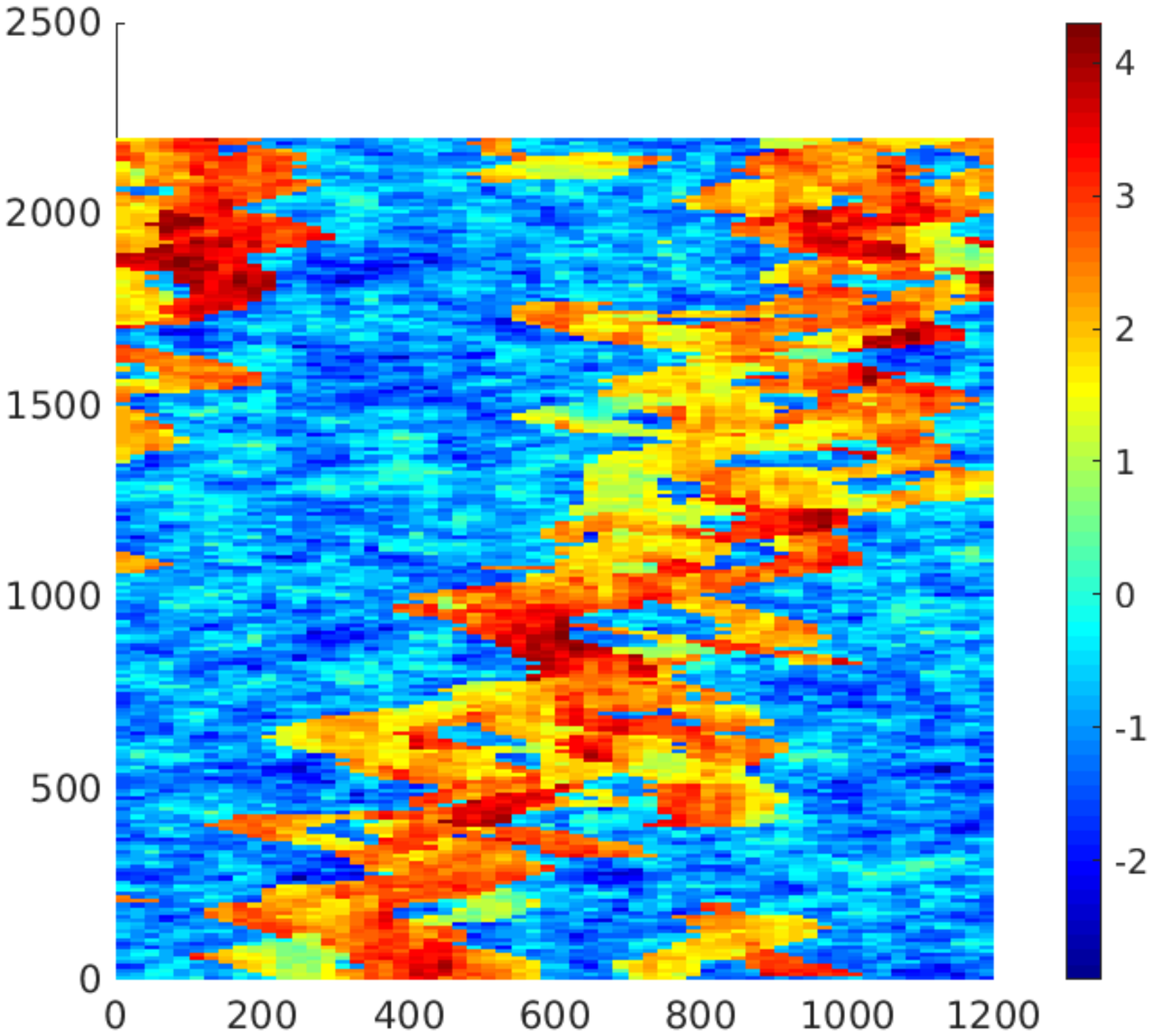}
    \label{spe_perm}
  }
  \subfigure[Pressure field]{
    \includegraphics[trim=1.9cm 6.5cm 2.6cm 6.4cm,clip=true,width=0.5\textwidth]{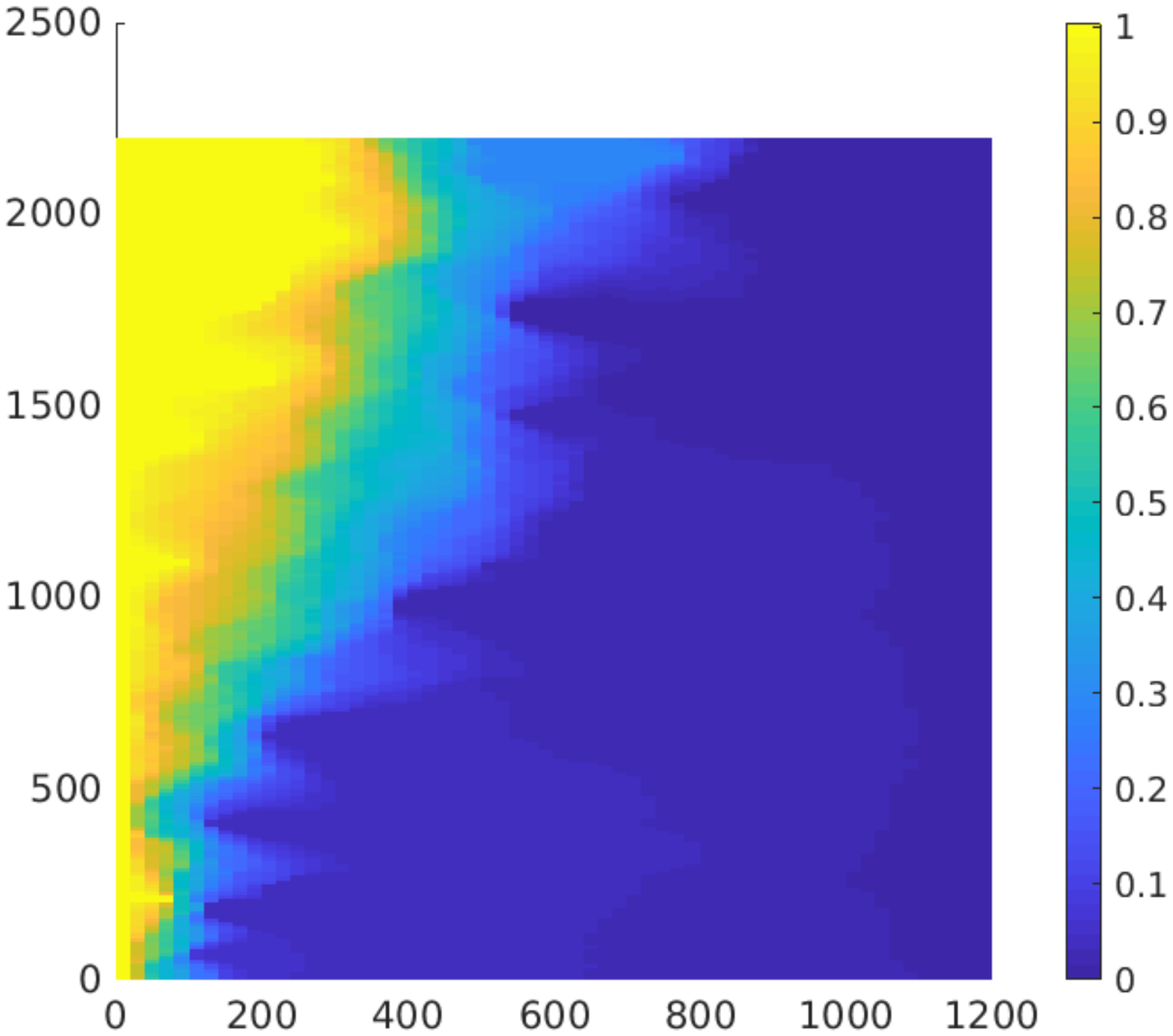}
    \label{spe_pressure}
  }
    \caption{Example V: Permeability ($\kappa$)
    in log scale (left) and pressure field (right) using solution order $\p=1$. }
 \vspace{-7mm}
  \label{perm_pressure}
\end{figure}


\begin{table}[h!b!t!]
 \begin{center}
 \begin{tabular}{ | r || c | c | c | c | }
 \hline
     \multirow{2}{*}{$\p$} & \multicolumn{2}{|c|}{Varying smoothing} & \multicolumn{2}{|c|}{2 pre- and post-smoothing} \\
     \cline{2-5}
     & $\tau=1/h_{min}$ & $\tau=1$ & $\tau=1/h_{min}$ & $\tau=1$\\
 \hline
     1 & 47 & 78 & 51 & 80 \\
 \hline
 2 & 81 & * & 72 & * \\
 \hline
 3 & 58 & 40 & 56 & 48 \\
 \hline
 4 & 48 & 39 & 49 & 45 \\
 \hline
 \end{tabular}
\caption{Example V. HDG with stabilizations $\tau=1/h_{min}$, $\tau=1$: the number of iterations for multigrid preconditioned GMRES with variable smoothing and constant smoothing.}
\label{tab:tau_1byh_1em9}
     \vspace{-5mm}
 \end{center}
 \end{table}

\subsection{Example VI: Multinumerics}
\seclab{multinumerics}

The goal of this section is to demonstrate that the proposed  multigrid algorithm can handle different fine scale DtN maps (multinumerics) on unstructured grids without explicit upscaling for macro-elements. 
In the following, we choose a combination of RT1 and NIPG as a representative for 
    other combinations of hybridized methods. We expect similar results using other combinations, as our multigrid algorithm operates on the
    trace system. Also we would like to point out that the other multigrid algorithms for hybridized methods \cite{
 Gopalakrishnan09aconvergent,cockburn2014multigrid,kronbichler2018performance,chen2015auxiliary} may also work for the case of multinumerics. However,
 to the best of our knowledge we are not able to find such a study yet and it is beyond the current scope of this paper. In our future work we intend to compare these different 
 algorithms along with our approach in the context of multinumerics.

We consider the following permeability field 
 \[ {\bf K} = (1+1/2\sin(4\pi x)\cos(3\pi y)){\bf I},\]
 and  $f=1$. We consider unit square and use either RT1
mixed finite elements or first order NIPG discontinuous Galerkin as shown in Figure \ref{tab:multinumerics} (left).
We plot the finest mesh in the left of Figure~\ref{tab:multinumerics} and a coarsening strategy with 4 levels in Figure~\ref{fig:dg_mixed_levels}. We point out that (many or all)  macro-elements on the coarser grids contain both RT1 and NIPG fine grid elements.
\begin{figure}[h!b!t!]
\begin{minipage}{0.25\linewidth}
\begin{center}
\includegraphics[scale = 0.22, trim=1cm 1cm 3cm 0cm,clip=true]{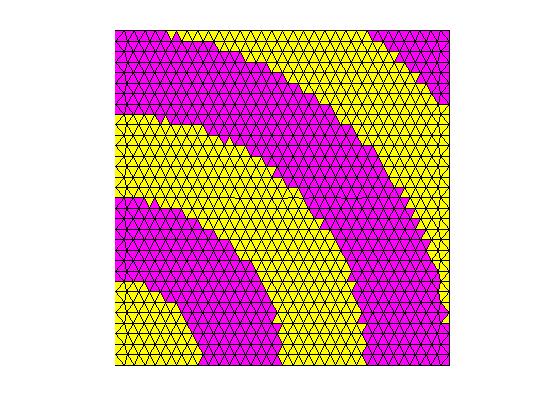}
\end{center}
    \end{minipage}
    \begin{minipage}{0.75\linewidth}
 \begin{center}
 \begin{tabular}{| r || c | c | c | c | c |} \hline
     Levels & 3 & 4 & 5 & 6 & 7 \\
     \hline
     Elements & 120 & 496 & 2016 & 8128 & 32640 \\
     \hline
     Iterations & 9 & 10 & 9 & 9 & 9 \\
     \hline
  \end{tabular}
 \end{center}
    \end{minipage}
    \caption{Example VI: On the left, NIPG-H elements are purple and RT1 elements  are yellow. On the right are the number of iterations for the multigrid solver using multinumerics.}
\label{tab:multinumerics}
\end{figure}
\vspace{-4mm}
\begin{figure}[h!b!t!]
\begin{center}
\includegraphics[scale = 0.22]{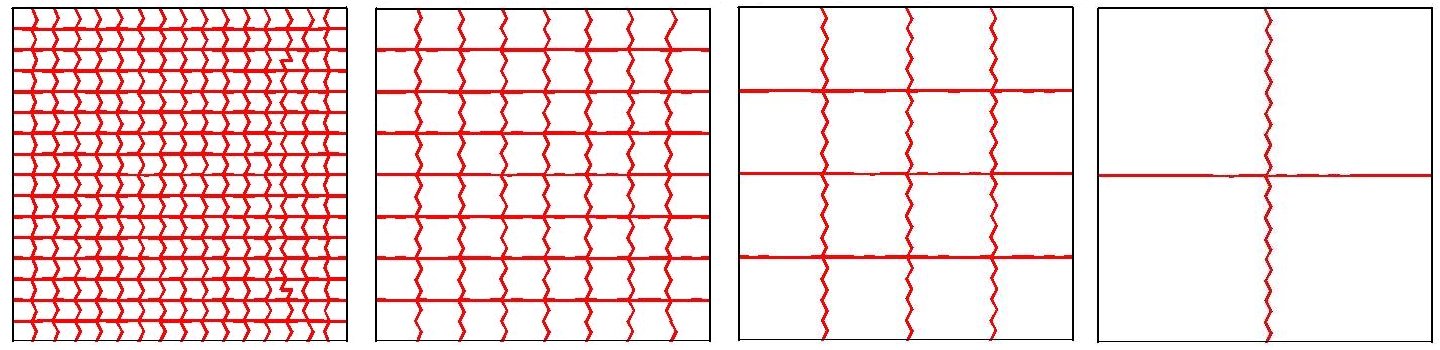}
\end{center}
\caption{Example VI: Coarsening strategy.}
\label{fig:dg_mixed_levels}
\end{figure}

On the right of Figure \ref{tab:multinumerics} are the iteration
counts using the multigrid solver. It can be observed that the
performance, i.e. the number of iterations, using multinumerics is
nearly identical to those for NIPG-H in Section \secref{exp1} despite
the fact that different numerical methods are used throughout the
domain and the macro-elements are neither triangles nor
quadrilaterals. This flexibility is one of the highlighted
characteristics of our multigrid algorithm.


\section{Conclusion}
\seclab{conclusion}
We have proposed a unified DtN-based geometric multigrid algorithm for hybridized high-order finite
element methods. Our approach differs significantly from the previous
attempts in the sense that the intergrid transfer operators are
physics-based energy-preserving and the coarse grid operators are
discretized DtN maps on every level. These operators completely avoid
upscaling of parameters, such as permeability and source, to coarse
scales. As our numerical results have indicated, they also allow for
multinumerics and variable coefficients without deteriorating the
performance of the multigrid algorithm. We have presented several
numerical examples with hybridized discontinuous Galerkin (HDG)
methods, hybridized interior penalty discontinuous Galerkin (IPDG)
methods and reported several  observations. For HDG methods,
with stabilization $\tau=1$ or $1/h_{min}$ (depending on the
example), we obtain almost perfect scalability in mesh size and
solution order using multigrid preconditioned GMRES. For all the other
hybridized IPDG methods, with a stabilization of the form
$\tau=\mc{O}\LRp{\p^2/h_{min}}$ similary scalability is
observed. However, the multigrid algorithm applied to IPDG seems to be less robust compared to HDG with
respect to stabilization parameters and coarsening strategies
especially for highly unstructured meshes. We have also reported scalable
results for the proposed multgrid method when applying to  multinumeric discretizations with mixed methods in certain
parts of the domain and DG methods in other parts.
We are currently  extending  our multigrid algorithm to convection-diffusion, Stokes, Oseen and incompressible resistive magnetohydrodynamic (MHD) equations. Ongoing research is to provide a rigorous analysis of the proposed multigrid algorithm and we will report our findings elsewhere in the near future.

\section*{Acknowledgements}
T.~Wildey's work is supported by the Office of Science Early Career Research Program. S. Muralikrishnan and T. Bui-Thanh are partially
supported by the DOE grant DE-SC0018147 and NSF grant NSF-DMS1620352. We are grateful for the support. The authors would like to thank the anonymous referees for their critical and useful comments that significantly improved the paper.  

\bibliography{references,ceo,DtNbib}

\end{document}